\begin{document}

\title{\bf On the classification of rational surface singularities}
\author{Jan Stevens}
\address{Matematiska vetenskaper, G\"oteborgs universitet 
och Chalmers tekniska h\"og\-skola, 41296
G\"oteborg, Sweden}
\email{stevens@chalmers.se}

\def\Bbb{\mathbb} \def\cal{\mathcal}
\def\wt#1{\widetilde{#1}}
\def\sier#1{{\cal O}_{#1}}
\def\C{{\Bbb C}} \def\Q{{\Bbb Q}}
\def\Z{{\Bbb Z}} \def\P{{\Bbb P}}
\def\cE{{\cal E}} \def\cW{{\cal W}}
\def\roep #1.{\medbreak\noindent{\bf #1\/}.\enspace}
\def\endroep{\par\medbreak}
\long\def\comment#1\endcomment{}

\newtheorem{theorem}{Theorem}[section]
\newtheorem{prop}[theorem]{Proposition}
\newtheorem{lemma}[theorem]{Lemma}
\newtheorem{cor}[theorem]{Corollary}
\newtheorem*{conj}{Conjecture}
\newtheorem*{question}{Question}
\newtheorem*{propstar}{Proposition}
\newtheorem*{classification}{Classification}
\theoremstyle{definition}
\newtheorem{defn}[theorem]{Definition}
\newtheorem{remark}[theorem]{Remark}
\newtheorem{example}[theorem]{Example}

\newcommand{\bp}{\begin{picture}}
\newcommand{\bpn}{\begin{picture}(0,0)}
\newcommand{\ep}{\end{picture}}

\def\vir{\makebox(0,0){\rule{8pt}{8pt}}}
\def\cir{\circle*{0.23}}

\unitlength 30pt

\def\ci#1{%
{\bpn\put(0,0){\cir}\put(0,0.2){\makebox(0,0)[b]{$\scriptstyle #1$}}\ep}}
\def\vi#1{%
{\bpn\put(0,0){\vir}\put(0,0.3){\makebox(0,0)[b]{$\scriptstyle #1$}}\ep}}
\def\vis#1{%
{\bpn\put(0,0){\vir}\put(0.1,0.3){\makebox(0,0)[bl]{$\scriptstyle #1$}}\ep}}
\def\vit#1{%
{\bpn\put(0,0){\vir}\put(0.15,0){\makebox(0,0)[l]{$\scriptstyle #1$}}\ep}}
\def\cirr#1{%
{\bpn\put(0,0){\cir}\put(0.2,0){\makebox(0,0)[l]{$\scriptstyle #1$}}\ep}}

\def\lijn{\line(1,0){1}}
\def\keten{{\bpn\put(0,0){\line(1,0){0.5}}
 \put(1,0){\makebox(0,0){$\dots$}}\put(2,0){\line(-1,0){0.5}}\ep}}
\def\ketvert{{\bpn\put(0,0){\line(0,-1){0.5}}
 \put(0,-.7){\makebox(0,0){$\vdots$}}\put(0,-1.6){\line(0,1){0.5}}\ep}}

\def\ketDia{{\bpn\put(0,0){\line(2,1){0.5}}
 \put(1,.5){\makebox(0,0){$\Ddots$}}\put(2,1){\line(-2,-1){0.5}}\ep}}
\def\ketdia{{\bpn\put(0,0){\line(2,-1){0.5}}
 \put(1,-.4){\makebox(0,0){$\jddots$}}\put(2,-1){\line(-2,1){0.5}}\ep}}
\def\ketlDia{{\bpn\put(0,0){\line(-2,1){0.5}}
 \put(-1,.6){\makebox(0,0){$\jddots$}}\put(-2,1){\line(2,-1){0.5}}\ep}}
\def\ketldia{{\bpn\put(0,0){\line(-2,-1){0.5}}
 \put(-1,-.5){\makebox(0,0){$\Ddots$}}\put(-2,-1){\line(2,1){0.5}}\ep}}
\def\Ddots{\mathinner{\mkern1mu\raise1pt
\vbox{\kern7pt\hbox{.}}\kern3pt
\raise4pt\hbox{.}\kern3pt\raise7pt\hbox{.}\mkern1mu}}
\def\jddots{\mathinner{\mkern1mu\raise7pt
\vbox{\kern7pt\hbox{.}}\kern3pt
\raise4pt\hbox{.}\kern3pt\raise1pt\hbox{.}\mkern1mu}}

\def\nup#1[#2]{\bpn(0,1)\put(0,0){\line(0,1){1}}#1{0}
 \put(-0.2,0){\makebox(0,0)[r]{$\scriptstyle #2$}}\ep}


\begin{abstract}
A general strategy is given for the classification
of graphs of rational surface singularities.
For each maximal rational double point configuration
we investigate the possible multiplicities in the fundamental cycle.
We classify completely certain types of graphs.
This allows  to extend the classification of rational singularities 
to multiplicity 8. 
We also discuss the complexity of rational resolution graphs.
\end{abstract}

\maketitle

\section*{Introduction}
The topological classification of complex surface singularities
amounts to classifying resolution graphs. Such a graph represents a
complex curve on a surface, and the simplest case is when this curve
is rational;
then the singularity is called rational and the graph in fact determines
the analytical type of the singularity up to equisingular deformations.

Classification of singularities tends to lead to long lists,
but making them is not a purpose on its own.
Sometimes one  wants a list to prove statements by case by case
checking. If the lists become too unwieldy, as in the case on hand,
their main use will be to provide an ample supply of examples to
test conjectures on. With this objective the most useful description
of rational resolution graphs is as a list of parts, together with
assembly instructions, guaranteeing that the result is a rational
graph. For a special class of rational singularities, those with
almost reduced fundamental cycle, such a classification exists
\cite{ro, gu}. 

As prototype of our classification and to fix notations
we first treat the special case. The fundamental cycle
(\cite{ar}, see also Definition \ref{deffc}) can be seen
as divisor on the exceptional set of the resolution,
with positive coefficients (and it is this divisor which
should be rational as non-reduced curve). It is characterised
numerically as the minimal positive cycle intersecting
each exceptional curve non-positively, and can therefore be
computed using the intersection form encoded in the graph.
The fundamental cycle  is called almost reduced if it
is reduced at  the non-$(-2)$'s. So higher multiplicities 
can only occur on the maximal 
rational double point (RDP) configurations. 
The classification splits in two parts: one has to 
determine the multiplicities on the RDP-configurations and
how they can be attached to the rest of the graph. The
explicit list of graphs  can be found in the 
paper by Gustavsen \cite{gu}. Blowing down the 
RDP-configurations to rational double point singularities
gives the canonical model or RDP-resolution.  Its 
exceptional set can again by described by a graph.
Our classification strategy in general  is
to first find the graphs for the RDP-resolution,
and then determine
which rational double point (RDP) configurations can occur.

The first results in this paper are on
graphs, where each RDP-configuration is attached to at most 
one  non-reduced non-$(-2)$. 
The possible graphs for the RDP-resolution are easy to describe,
but here a new phenomenon occurs, that not every
candidate graph can be realised by a rational singularity. In particular,
if the graph contains only one  non-$(-2)$, this vertex
has multiplicity at most 6 in the fundamental cycle.
These considerations apply to all multiplicities, but only for a
restricted class of singularities; they cover all singularities of
low multiplicity.
Our present results extend the classification
of rational singularities of multiplicity 4 \cite{st-p}, 
and  allow to recover the classification by Tosun et al.~for
multiplicity 5 \cite{tosun}.

Multiplicity 6 necessitates the study of RDP-configurations, connecting
two non-reduced non-$(-2)$'s.
We first determine the conditions under which the multiplicities in
the fundamental cycle become as high as possible. We do this
for each  RDP-configuration separately. The existence depends
on the rest of the graph.  Then we use the
same computations to treat the case that the  non-$(-2)$'s have
multiplicity exactly two. This allows us to complete the classification
of  rational singularities of multiplicity 6. 
The same methods work for multiplicity 7 and 8,
but we do not treat these cases explicitly,  except for one new case,
of three  non-reduced non-$(-2)$'s, with which we  conclude our classification.

We do not claim that it is feasible to treat all
multiplicities with our methods.
Our last result, on multiplicity 8,
gives a glimpse of what is needed in general. To use
induction over the number of non-$(-2)$'s, one needs
detailed knowledge on the graphs for lower multiplicity, and it
does not suffice to compute with  RDP-configurations separately. We include (at the end of the
first section) a non-trivial example  of a rational graph,
of multiplicity 37; the graph of the canonical model is 
rather simple. This example comes from a paper by
Karras \cite{ka}, which maybe contains 
the deepest study of the structure of 
resolution graphs in the literature.
He proves that every rational singularity
deforms  into a
cone over a rational normal curve of the same multiplicity.
My main motivation for taking up the classification again lies in
the same  direction. The ultimate
goal is to study the Artin component of the
semi-universal deformation. Over this component a simultaneous
resolution exists (or, without base change, a simultaneous
canonical model). 
This is one motivation of  our
classification strategy of first  finding 
the graph for the RDP-resolutions.
The analytical type of 
the total space over the Artin component
(up to smooth factors) is an interesting
invariant of the singularity.
In his thesis \cite{ro} Ancus Röhr turned the
problem of formats around and defined the format as 
just this invariant.  He showed that the
format determines the exceptional set of the canonical model
of the singularity. Examples in this paper  cast doubt on our
earlier conjecture that the converse holds.

RDP-configurations can be of type $A$, $D$ and $E$. Our computations show
that one cannot reach high multiplicities in the fundamental cycle
using configurations of type $D$ and $E$. 
With this goal it suffices to look at configurations
of type $A$. Indeed, the picture which arises from our
classifications, is that for most purposes it suffices to look
at rather simple configurations of type $A$.

One answer to the question how complex
a graph can be is that of Lê and Tosun \cite{lt}, who take the number
of rupture points (vertices with valency at least 3) as measure.
We give a simplified proof of their estimate, that this number is
bounded by $m-2$, where $m$ is the multiplicity of the singularity.
Our argument shows that the highest complexity is attained by
graphs with reduced fundamental cycle.

The structure of this paper is as follows. In the first section
we review some properties of resolution graphs. The next section gives
the classification of singularities with almost reduced fundamental cycle.
Section 3 is about  complexity in the sense of \cite{lt}. Then we
discuss the format of a rational singularity, following \cite{ro}.
Our computations use a special way to compute the fundamental cycle,
which we explain in Section 5. The case, where  
each RDP-configuration is attached to at most one 
non-reduced non-$(-2)$, is treated in Section 6, while the following
section describes RDP-configurations on general graphs. In the
final section we complete the classification for multiplicity 6
and treat  the case of three  non-reduced non-$(-2)$'s
in multiplicity 8.

\section{Rational graphs}\label{sect_een}
In this section we review some properties of resolution graphs.
References are Artin \cite{ar}, Wagreich \cite{wag} and Wall
\cite{wal}, and for rational singularities in addition Laufer
\cite{la}.

The topological type of a normal complex
surface singularity is determined by and  determines
the resolution graph of the minimal good resolution \cite{ne}.
A resolution graph 
can be defined for any resolution. 

\begin{defn}
Let $\pi\colon (M,E)\to (X,p)$ be a resolution of a surface 
singularity with exceptional divisor $E=\bigcup_{i=1}^r E_i$.
The \textit{resolution graph}  $\Gamma$ is a weighted graph 
with vertices 
corresponding to the irreducible components $E_i$. 
Each vertex has two weights,
the self-intersection $-b_i=E_i^2$, and the arithmetic genus $p_a(E_i)$, the second
traditionally written in square brackets and omitted if zero.
There is an edge between vertices 
if the corresponding components $E_i$ and $E_j$ intersect,  
weighted with the intersection number
$E_i\cdot E_j$  (only written out if larger than one).
\end{defn}
Other definitions, which record more information, are possible:
one variant is to have an edge 
for 
each intersection point $P\in E_i\cap E_j$, with weight
the local intersection number $(E_i\cdot E_j)_P$. These 
subtleties need not concern us here, as the exceptional divisor
of a rational singularity is a simple normal crossings divisor.

We call the vertices of the graph $\Gamma$ also for $E_i$. This
should  cause no confusion. From the context it will be clear whether we
consider $E_i$ as vertex or as curve. In fact, we use $E_i$ also in
a third sense. The classes of the curves $E_i$
form a preferred basis of $H:=H_2(M,\Z)$. 
Following
algebro-geometric tradition the elements of $H$ are called
\textit{cycles}. They are written as linear combinations of the $E_i$.

The resolution graph (as defined above)
is also the graph of the quadratic lattice $H:=H_2(M,\Z)$, in the sense of
\cite{lw}.  The intersection form on $M$ gives  a negative definite
quadratic form on $H$. Let $K\in H^2(M,\Z)=H^\#$ be the canonical
class. It can be written as rational cycle in $H_\Q=H\otimes \Q$ by solving
the adjunction equations $E_i\cdot(E_i+K)=2p_a(E_i)-2$.
The function $-\chi(A)=\frac12 A\cdot(A+K)$, $A\in H$, makes $H$ 
into a \textit{quadratic lattice} \cite[1.4]{lw}. 
We prefer to work with the genus
$p_a(A)=1-\chi(A)$. Note that the genus function determines the
intersection form, as
\[
p_a(A+B)=p_a(A)+p_a(B)+A\cdot B -1\;.
\]
The data $(H,p_a)$ is equivalent to $(H, \{E_i\cdot E_j\},
\{p_a(E_i)\})$, encoded in the resolution graph $\Gamma$.
Sometimes we identify $H$ with the free abelian group on the vertex
set of $\Gamma$, and talk about cycles on $\Gamma$.

\begin{defn}
A cycle $A=\sum a_iE_i$ (in $H$ or $H_\Q$) is \textit{effective}
 or non-negative,
$A\geq 0$, if all $a_i\geq0$. 
There is a natural inclusion $j\colon
H\to H^\#$, given by $j(A)(B)=-A\cdot B$ (note the minus sign,
because of negative definiteness). 
A cycle $A$ is \textit{anti-nef}, if
$j(A)\geq 0$ in $H^\#$, i.e., $A\cdot E_i\leq0$ for all $i$.  The
anti-nef elements in $H$ form a semigroup $\cE$ and one writes
$\cE^+$ for $\cE\setminus\{0\}$.
\end{defn}
If $A$ is anti-nef, then
$A\geq0$. Indeed, write $A=A_+-A_-$ with $A_+$, $A_-$ non-negative
cycles with no components in common. Then $0\leq -A\cdot A_-=
A_-^2-A_+\cdot A_- \leq A_-^2$, so by negative definiteness $A_-=0$.
Furthermore, if  $A\in \cE^+$, then $A\geq E$, where $E=\sum E_i$ is
the reduced exceptional cycle. Indeed, if the support of $A$ is not
the whole of $E$, then there exists an $E_i$ intersecting $A$ strict
positively, as $A>0$, and $E$ is connected.

\begin{defn}
Given two cycles $A=\sum a_iE_i$, $B=\sum b_iE_i$, their \textit{infimum} is
the cycle $\inf(A,B)=\sum c_iE_i$ with $c_i=\min
(a_i,b_i)$ for all $i$. This definition extends to subsets of
$\cE$.
\end{defn}

\begin{lemma}
Let  $\cW\subset \cE^+$ be a subset. Then $\inf\; \cW\in \cE^+$.
\end{lemma}

\begin{proof}
Let $W=\inf \cW$. Fix an $i$ and choose $A\in \cW$ with $a_i$
minimal. Then $0\geq E_i\cdot A= E_i\cdot(A-W)+E_i\cdot W\geq
E_i\cdot W$, as $A-W\geq0$ with coefficient 0 at $E_i$. So $W\cdot
E_i\leq 0$ for all $i$.  As $A\geq E$ for all $A\in \cW$, also $W\geq E>0$. 
\end{proof}

\begin{defn}\label{deffc}
The \textit{fundamental cycle} $Z$ is the cycle $\inf \cE^+$.
\end{defn}

In other words, the cycle $Z$ is the smallest cycle such that
$E_i\cdot Z\leq0$ for all $i$. It can be computed with a
\textit{computation sequence} \cite{la}. Start with any cycle $Z_0$
known to satisfy $Z_0\leq Z$; one such cycle is $E$. Let $Z_k$ be
computed. If $Z_k\neq Z$, then there is an $E_{j(k)}$ with
$Z_k\cdot E_{j(k)}>0$. Define $Z_{k+1}= Z_k+E_{j(k)}$. Then $(Z -
Z_k)\cdot E_{j(k)}<0$, so $E_{j(k)}$ lies in the support of $Z -
Z_k$, giving $E_{j(k)}\leq Z-Z_k$. Therefore $Z_{k+1}\leq Z$.

The fundamental cycle depends of course on the chosen resolution,
but in an easily controlled way. Therefore it can be used to
define invariants of the singularity  \cite{wag}.

Let $\sigma\colon M'\to M$ be
the blow-up in a point of $E$, with exceptional divisor $E_0'$. The
exceptional divisor of $M'\to X$ is $E'=E_0'+\sum_{i=1}^r E_i'$,
where the $E_i'$, $i\geq1$ are mapped onto the $E_i$. For a 
cycle $A=\sum a_iE_i$ on $M$  the pull-back $\sigma^*A$ is defined
as
\[
\sigma^*A=a_0E_0'+A^\#\;, \quad \textrm{ where }A^\#=\sum _{i=1}^r
a_i E_i' \textrm{ and } E_0'\cdot \sigma^*A=0\;.
\]
In fact, $a_0$ is the multiplicity of $A$ in the point blown up.
The main property of the intersection product in this connection is
that $\sigma^*A \cdot \sigma^*B =A\cdot B$. This product is then
also equal to $\sigma^*A \cdot B^\#$. 

The canonical cycle on $M'$
satisfies $K'=\sigma^*K+E_0'$. This gives that $\sigma^*A\cdot
K'=\sigma^*A\cdot(\sigma^*K+E_0')= \sigma^*A\cdot \sigma^*K=A\cdot
K$ and therefore $p_a(\sigma^*A) =p_a(A)$.

\begin{lemma} The fundamental cycle $Z'$ on $M'$ is $\sigma^*Z$, the
pull back of the fundamental cycle on $M$.
\end{lemma}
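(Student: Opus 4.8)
The plan is to establish the two inequalities $Z'\le\sigma^*Z$ and $Z'\ge\sigma^*Z$ separately, using throughout the description of a fundamental cycle as the componentwise infimum of the nonzero anti-nef cycles. On $M'$ this means $Z'\le A'$ for every $A'$ in the semigroup $\cE'^{+}$ of nonzero anti-nef cycles, while on $M$ it means $\bar A\ge Z$ for every $\bar A\in\cE^+$; the whole argument is then a matter of feeding the right cycle into each of these statements. First I would dispose of $Z'\le\sigma^*Z$ by checking that $\sigma^*Z\in\cE'^{+}$. It is nonzero because $Z\ne0$, so only anti-nefness is at issue. Against $E_0'$ it is automatic, since $E_0'\cdot\sigma^*Z=0$ by the defining property of the pull-back. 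For $i\ge1$ I would use $(E_i)^\#=E_i'$ together with the identity $\sigma^*A\cdot B^\#=A\cdot B$ recorded above to obtain $\sigma^*Z\cdot E_i'=Z\cdot E_i\le0$, the final inequality holding because $Z$ is anti-nef on $M$. Hence $\sigma^*Z\in\cE'^{+}$ and $Z'\le\sigma^*Z$.

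The reverse inequality is the real content. I would write $Z'=c_0E_0'+\sum_{i\ge1}c_iE_i'$ and push it down to the cycle $\bar Z=\sum_{i\ge1}c_iE_i$ on $M$, so that $Z'=\sigma^*\bar Z+e\,E_0'$ with $e=c_0-\mathrm{mult}_p\bar Z$. Intersecting $Z'$ with $E_0'$ and using $E_0'^2=-1$ and $\sigma^*\bar Z\cdot E_0'=0$ gives $Z'\cdot E_0'=-e$, so anti-nefness forces $e\ge0$. Intersecting instead with $E_i'$ and using $E_0'\cdot E_i'=\mathrm{mult}_pE_i\ge0$ gives $\bar Z\cdot E_i=Z'\cdot E_i'-e\,(E_0'\cdot E_i')\le0$, since both terms are nonpositive once $e\ge0$. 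Thus $\bar Z$ is anti-nef, and it is nonzero because $Z'\ge E'$ forces each $c_i\ge1$; therefore $\bar Z\in\cE^+$, whence $\bar Z\ge Z$. As the pull-back is order preserving on effective cycles, $\sigma^*\bar Z\ge\sigma^*Z$, and adding the effective cycle $e\,E_0'$ yields $Z'=\sigma^*\bar Z+e\,E_0'\ge\sigma^*Z$. Together with the first step this gives $Z'=\sigma^*Z$.

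The hard part is precisely this reverse inequality, and in particular extracting a useful cycle on $M$ out of the a priori unknown $Z'$. The mechanism that makes everything work is the sign condition $Z'\cdot E_0'\le0$: it is exactly what guarantees that the correction term $e\,E_0'$ is effective, and this effectivity is what both forces the push-forward $\bar Z$ to be anti-nef and allows the comparison $\bar Z\ge Z$ on $M$ to propagate back up to the inequality $Z'\ge\sigma^*Z$ on $M'$.
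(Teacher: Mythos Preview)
Your proof is correct and follows essentially the same approach as the paper: show $\sigma^*Z\in\cE'^+$ to get $Z'\le\sigma^*Z$, then push $Z'$ down to an anti-nef cycle on $M$ to get the reverse inequality. The paper streamlines the second half by first observing that in fact $E_0'\cdot Z'=0$ (otherwise $Z'-E_0'$ would be a smaller element of $\cE'^+$), so that $Z'$ is \emph{exactly} a pull-back $\sigma^*Y$; your argument with $e\ge0$ works just as well but carries an extra term that the paper's observation eliminates at the outset.
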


\begin{proof} One has   $E_0'\cdot Z'=0$, for otherwise $Z'-E_0'$ is anti-nef.
Therefore $Z'= \sigma^*Y$ for some cycle $Y$ and $Y\cdot E_i=
\sigma^*Y \cdot \sigma^*E_i=Z'\cdot E_i'\leq 0$, so $Z\leq Y$. On the other
hand, $\sigma^*Z\in \cE'$, so $\sigma^*Y=Z'\leq\sigma^*Z$.
\end{proof}

\begin{cor} The genus $p_a(Z)$ and degree $-Z^2$ of the fundamental cycle
are invariants of the singularity.
\end{cor}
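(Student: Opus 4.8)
The plan is to reduce the statement to the behaviour of $Z$ under a single blow-up, which is already controlled by the two preceding results. If $\sigma\colon M'\to M$ denotes the blow-up at a point of $E$, the previous Lemma gives $Z'=\sigma^*Z$ for the fundamental cycle $Z'$ on $M'$. Combining this with the identities $\sigma^*A\cdot\sigma^*B=A\cdot B$ and $p_a(\sigma^*A)=p_a(A)$ established above yields immediately
\[
(Z')^2=(\sigma^*Z)^2=Z^2,\qquad p_a(Z')=p_a(\sigma^*Z)=p_a(Z),
\]
so both $-Z^2$ and $p_a(Z)$ are unchanged by one point blow-up, and hence, by induction, by any finite composition of blow-ups at points lying on the successive exceptional sets.

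To pass from this to genuine invariance I would compare an arbitrary resolution with the minimal one. Let $M_0\to (X,p)$ be the minimal resolution, with fundamental cycle $Z_0$. Every resolution $\pi\colon M\to (X,p)$ dominates $M_0$, and the induced morphism $\rho\colon M\to M_0$ is a proper birational map of smooth surfaces which is an isomorphism over $X\setminus\{p\}$. By the classical factorisation of such morphisms, $\rho$ is a finite sequence of point blow-ups; since $\rho$ is an isomorphism away from the fibre over $p$, every centre maps to $p$ and therefore lies on the exceptional set of the intermediate resolution. Thus the first step applies at each stage and gives $-Z^2=-Z_0^2$ and $p_a(Z)=p_a(Z_0)$. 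As the right-hand sides are independent of the chosen resolution $M$, both quantities are invariants of $(X,p)$.

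The computational content is entirely contained in the first step, and is immediate from the two pull-back identities. The only ingredient that is not formal is the global input from resolution theory: the existence of the minimal resolution with its universal property, and the decomposition of $\rho$ into blow-ups at points of the exceptional set. I expect this to be the main obstacle, in the sense that it is the one place where one must appeal to results outside the present framework; it is classical for surfaces and can be quoted from the references cited above, in particular \cite{wag}.
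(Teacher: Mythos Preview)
Your proof is correct and follows exactly the line the paper intends: the Corollary is stated without proof, as an immediate consequence of the preceding Lemma $Z'=\sigma^*Z$ together with the identities $\sigma^*A\cdot\sigma^*B=A\cdot B$ and $p_a(\sigma^*A)=p_a(A)$. You have additionally spelled out the passage from ``invariance under one blow-up'' to ``invariance of the singularity'' via the minimal resolution and the factorisation of birational morphisms of smooth surfaces into point blow-ups --- a step the paper leaves implicit (with a reference to \cite{wag}), but which is the correct way to complete the argument.
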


\begin{defn} The \textit{fundamental genus} of a singularity is the genus $p_a(Z)$
of the fundamental cycle.
\end{defn}

A singularity has also an arithmetic genus \cite{wag} (the largest value
of $p_a(D)$ over all effective cycles $D$), but this is a less interesting 
invariant. More important is the geometric genus, which is
$h^1(\sier M)$, and also the  largest value of  $h^1(\sier D)$
over all effective cycles $D$. 

Rational singularities were introduced by  Artin \cite{ar} using the
geometric genus of singularities. He proved the following
characterisation, which we take as definition.

\begin{defn}  A normal surface singularity is \textit{rational} if
its fundamental genus $p_a(Z)$ is equal to $0$.
\end{defn}

Artin also proves that  degree $-Z^2$ of the fundamental cycle 
is equal to the multiplicity
$m$ of the singularity. The embedding dimension of $X$ is $m+1$,
which is maximal for normal surface singularities of multiplicity
$m$.

\begin{theorem}[Laufer's rationality criterion]
A resolution graph represents a rational singularity if and only if
\begin{itemize}
\item each vertex $E_i$  has $p_a(E_i)=0$,
\item if a cycle $Z_k$ occurs in a computation sequence  
and if $Z_k\cdot E_i>0$, then $Z_k\cdot E_i=1$.
\end{itemize}
\end{theorem}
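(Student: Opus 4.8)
The plan is to track the genus function $p_a$ along a computation sequence and exploit its additivity. For a single step $Z_{k+1}=Z_k+E_{j}$ the formula $p_a(A+B)=p_a(A)+p_a(B)+A\cdot B-1$ gives
\[
p_a(Z_{k+1})-p_a(Z_k)=p_a(E_{j})+Z_k\cdot E_{j}-1 .
\]
I would work with computation sequences that start from a single vertex, $Z_1=E_{i_1}$; this is allowed since any $E_{i_1}\leq Z$. Telescoping along such a sequence $Z_1<\dots<Z_\ell=Z$ yields
\[
p_a(Z)=p_a(E_{i_1})+\sum_{k=1}^{\ell-1}\bigl(p_a(E_{i_{k+1}})+Z_k\cdot E_{i_{k+1}}-1\bigr).
\]
Two elementary facts feed into this: each irreducible $E_i$ has $p_a(E_i)\geq0$, and by definition of a computation sequence every added vertex satisfies $Z_k\cdot E_{i_{k+1}}\geq1$. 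Hence every summand, together with $p_a(E_{i_1})$, is non-negative.

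The \emph{if} direction is then immediate. Given both conditions, choose any such sequence: the first bullet forces $p_a(E_{i_{k+1}})=0$, and the second bullet, applied to the positive intersection $Z_k\cdot E_{i_{k+1}}>0$, forces $Z_k\cdot E_{i_{k+1}}=1$. Every bracket vanishes and $p_a(E_{i_1})=0$, so $p_a(Z)=0$ and the singularity is rational.

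For the \emph{only if} direction, assume $p_a(Z)=0$. Since the right-hand side of the telescoped identity is a sum of non-negative terms equal to zero, every term vanishes: $p_a(E_{i_1})=0$, and for each $k$ both $p_a(E_{i_{k+1}})=0$ and $Z_k\cdot E_{i_{k+1}}=1$. Because $Z\geq E$ (proved in the excerpt), the support of $Z$ is all of $E$, so every vertex $E_i$ occurs somewhere in the sequence; this gives $p_a(E_i)=0$ for all $i$, which is the first bullet.

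It remains to establish the second bullet for \emph{every} computation sequence and \emph{every} component, whereas the identity only controls the vertex actually added. This is the main obstacle, and I would handle it by rerouting. Suppose some $Z_k$ occurring in some computation sequence satisfied $Z_k\cdot E_i\geq2$. A single-vertex computation sequence reaching $E$ exists (add the vertices in a spanning-tree order, so each new vertex meets the partial sum with intersection $\geq1$); prepending it and following the given sequence produces a single-vertex sequence through $Z_k$. At $Z_k$ one is free to add $E_i$, and the excerpt's argument shows that $Z_k\cdot E_i>0$ implies $E_i\leq Z-Z_k$, so $Z_k+E_i\leq Z$; from there the sequence completes to $Z$, since any cycle $\leq Z$ that is not anti-nef, hence not equal to $Z$, admits a further vertex. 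Telescoping this single-vertex-to-$Z$ sequence, the step at $Z_k$ contributes $p_a(E_i)+Z_k\cdot E_i-1\geq1$ while all other brackets are $\geq0$, giving $p_a(Z)\geq1$ and contradicting rationality. Thus no such step exists, which is precisely the second bullet. The only points needing care are the two reachability claims---that a single vertex builds up to $E$, and that a partial sequence always extends to $Z$---both of which follow from connectedness of $E$ and the minimality of $Z$ as the smallest element of $\cE^+$.
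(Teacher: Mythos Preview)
Your proof is correct and follows exactly the idea the paper sketches: the paper only remarks that ``the criterion follows from the fact that the genus cannot decrease in a computation sequence, as $p_a(Z_k+E_i)=p_a(Z_k)+p_a(E_i)+Z_k\cdot E_i-1$,'' and you have filled in the details of this argument, including the rerouting step needed to upgrade the conclusion from one sequence to all sequences. Note that your prepending argument tacitly assumes the ambient computation sequence starts at $E$ (or a single vertex); this is the intended convention---if one allowed arbitrary starting cycles $Z_0\leq Z$, the $D_4$ graph with $Z_0=2E_0$ would already violate the second bullet---so your reading is the right one.
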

For the \lq if\rq-direction it suffices to have the second property
for the steps in one computation sequence,
starting from a single vertex.
The criterion follows from the fact that the genus cannot decrease
in a computation sequence, as 
$p_a(Z_k+E_i)=p_a(Z_k)+p_a(E_i)+Z_k\cdot E_i -1$. 

All irreducible components of the exceptional set have to be smooth
rational curves,  pairwise  intersecting transversally in at most one point. 
This shows that minimal resolution  of a rational singularity
is a good resolution.

Following 
Lê--Tosun \cite{lt}
we call the minimal resolution graph of a rational singularity a
\textit{rational graph}. 
It can be characterised combinatorically as
weighted tree (with only vertex weights $-b_i\leq -2$), representing
a negative definite quadratic form, such that the genus of the
fundamental cycle is 0.

The  main invariant of a rational graph 
is its \textit{degree} $-Z^2$.
It is related to the \textit{canonical degree} $Z\cdot K$
by $-Z^2=Z\cdot K+2$, as  $p_a(Z)=0$. Let $Z=\sum z_iE_i$, $-b_i=E_i^2$. Then
\[
Z\cdot K = \sum z_i(b_i-2)\;.
\]
So the degree is determined by the coefficients $z_i$
of the fundamental cycle at non-$(-2)$-vertices $E_i$.

As example of a rational graph we show the one
(of degree 37) occurring in the paper of Karras \cite{ka}. Every
\rule{8pt}{8pt}
is a $(-3)$-vertex. The numbers are the coefficients of the fundamental
cycle.
\[
\unitlength=20pt
\bp(15,10)(1,-9)
\put(1,0){\ci2} \put(1,0){\lijn}
\put(2,0){\ci3} \put(2,0){\lijn}
\put(3,0){\ci4} \put(3,0){\lijn}
\put(4,0){\ci5} \put(4,0){\lijn}
\put(5,0){\ci6} \put(5,0){\lijn}
\put(6,0){\ci7} \put(6,0){\lijn}
\put(7,0){\ci8} \put(7,0){\lijn}
\put(8,0){\ci{9}} \put(8,0){\lijn}
\put(9,0){\vis{10}} \put(9,0){\lijn}
\put(9,0){\line(0,1){1}}  \put(9,1){\cirr 5}
\put(9,0){\line(0,-1){1}}  \put(9,-1){\cirr 9}
\put(9,-1){\line(0,-1){1}}  \put(9,-2){\cirr 8}
\put(9,-2){\line(0,-1){1}}  \put(9,-3){\cirr 7}
\put(9,-3){\line(0,-1){1}}  \put(9,-4){\cirr 6}
\put(9,-4){\line(0,-1){1}}  \put(9,-5){\cirr 5}
\put(9,-5){\line(0,-1){1}}  \put(9,-6){\cirr 4}
\put(9,-6){\line(0,-1){1}}  \put(9,-7){\cirr 3}
\put(9,-7){\line(0,-1){1}}  \put(9,-8){\cirr 2}
\put(9,-8){\line(0,-1){1}}  \put(9,-9){\cirr 1}
\put(10,0){\vi{6}} \put(10,0){\lijn}
\put(11,0){\ci7} \put(11,0){\lijn}
\put(12,0){\vis8} \put(12,0){\lijn}
\put(12,0){\line(0,1){1}}  \put(12,1){\cirr 4}
\put(12,0){\line(0,-1){1}}  \put(12,-1){\cirr 7}
\put(12,-1){\line(0,-1){1}}  \put(12,-2){\cirr 6}
\put(12,-2){\line(0,-1){1}}  \put(12,-3){\cirr 5}
\put(12,-3){\line(0,-1){1}}  \put(12,-4){\cirr 4}
\put(12,-4){\line(0,-1){1}}  \put(12,-5){\cirr 3}
\put(12,-5){\line(0,-1){1}}  \put(12,-6){\cirr 2}
\put(12,-6){\line(0,-1){1}}  \put(12,-7){\cirr 1}
\put(13,0){\vi5} \put(13,0){\lijn}
\put(14,0){\vis6} \put(14,0){\lijn}
\put(14,0){\line(0,1){1}}  \put(14,1){\cirr 3}
\put(14,0){\line(0,-1){1}}  \put(14,-1){\cirr 5}
\put(14,-1){\line(0,-1){1}}  \put(14,-2){\cirr 4}
\put(14,-2){\line(0,-1){1}}  \put(14,-3){\cirr 3}
\put(14,-3){\line(0,-1){1}}  \put(14,-4){\cirr 2}
\put(14,-4){\line(0,-1){1}}  \put(14,-5){\cirr 1}
\put(15,0){\ci4} \put(15,0){\lijn}
\put(16,0){\ci2}
\ep
\]

\section{Almost reduced fundamental cycle}
\label{arfc}
As the lists in the classification become unwieldy, we first treat
a simple special case, where only $(-2)$ vertices can have higher
multiplicity in the fundamental cycle. 
Its classification is contained in the thesis
of R\"ohr \cite{ro} as part of more general results.
The explicit list (Tables \ref{tableA}, \ref{tableB} 
and \ref{tableC}) 
of graphs of RDP-configurations
can be found with Gustavsen \cite{gu}.

\begin{defn}[\cite{la3}] A rational singularity has an \textit{almost reduced}
fundamental cycle if the fundamental cycle $Z=\sum z_iE_i$ 
on the minimal resolution  is reduced at the
non-$(-2)$'s, i.e., $z_i=1$ if $b_i>2$. 
\end{defn}

We also talk
about rational graphs with almost reduced fundamental cycle.

One can compute the fundamental cycle
starting from the reduced exceptional cycle by only adding curves
occurring in rational double  point configurations. The computation
can be done for each configuration separately. Therefore we
start with these configurations.

\begin{table}\caption{RDP-configurations, attached to one curve}\label{tableA}
\begin{list}{}{}
\item[$A_n^k${\rm :}] 
\[
\bp(8,1)(0,-1)
\put(0,0){\ci1}  \put(0,0){\keten}
\put(2,0){\ci{k-1}} \put(2,0){\lijn}
\put(3,0){\ci k}  \put(3,0){\keten}
\put(3,0){\line(0,-1){1}} \put(3,-1){\vir}
\put(5,0){\ci {k}}
\put(5,-.11){\makebox(0,0)[t]{$\downarrow$}} \put(5,0){\lijn}
\put(6,0){\ci {k-1}}  \put(6,0){\keten}
\put(8,0){\ci1}
\ep
\]
\item[${}^{\it I}\!D_k^2${\rm :}] 
\[
\bp(4,1)(0,-1)
\put(0,0){\vir}  \put(0,0){\lijn}
\put(1,0){\ci2} \put(1,0){\keten}
\put(1,-.11){\makebox(0,0)[t]{$\downarrow$}}
\put(3,0){\ci 2}  \put(3,0){\lijn}
\put(3,0){\line(0,-1){1}} \put(3,-1){\cirr1}
\put(4,0){\ci 1}
\ep
\]

\item[${}^{\it II}\!D_{2k}^{k}${\rm :}] 
\[
\bp(5,1)(0,-1)
\put(0,0){\vir}  \put(0,0){\lijn}
\put(1,0){\ci k} \put(1,0){\lijn}
\put(1,-.11){\makebox(0,0)[t]{$\downarrow$}}
\put(2,0){\ci {2k-2}}  \put(2,0){\lijn}
\put(3,0){\ci {2k-3}}  \put(3,0){\keten}
\put(2,0){\line(0,-1){1}} \put(2,-1){\cirr {k-1}}
\put(5,0){\ci 1}
\ep
\]
\item[${}^{\it II}\!D_{2k+1}^{k}${\rm :}] 
\[
\bp(5,1)(0,-1)
\put(0,0){\vir}  \put(0,0){\lijn}
\put(1,0){\ci k} \put(1,0){\lijn}
\put(2,0){\ci {2k-1}}  \put(2,0){\lijn}
\put(3,0){\ci {2k-2}}  \put(3,0){\keten}
\put(2,0){\line(0,-1){1}} \put(2,-1){\cirr k}
\put(1.91,-1){\makebox(0,0)[r]{$\leftarrow$}}
\put(5,0){\ci 1}
\ep
\]

\item[$E_6^2${\rm :}] 
\[
\bp(5,1)(0,-1)
\put(0,0){\vir}  \put(0,0){\lijn}
\put(1,0){\ci 2} \put(1,0){\lijn}
\put(2,0){\ci 3}  \put(2,0){\lijn}
\put(3,0){\ci 4}  \put(3,0){\lijn}
\put(3,0){\line(0,-1){1}} \put(3,-1){\cirr 2}
\put(4,0){\ci 3}  \put(4,0){\lijn}
\put(5,0){\ci 2}
\put(5,-.11){\makebox(0,0)[t]{$\downarrow$}}
\ep
\]
\item[$E_7^3${\rm :}] 
\[
\bp(6,1)(0,-1)
\put(0,0){\vir}  \put(0,0){\lijn}
\put(1,0){\ci 3} \put(1,0){\lijn}
\put(2,0){\ci 4}  \put(2,0){\lijn}
\put(3,0){\ci 5}  \put(3,0){\lijn}
\put(4,0){\ci 6}  \put(4,0){\lijn}
\put(4,0){\line(0,-1){1}} \put(4,-1){\cirr 3}
\put(5,0){\ci 4}  \put(5,0){\lijn}
\put(6,0){\ci 2}
\put(1,-.11){\makebox(0,0)[t]{$\downarrow$}}
\ep
\]
\end{list}
\end{table}
\begin{table}\caption{RDP-configurations, attached to two curves}\label{tableB}
\begin{list}{}{}
\item[$A_n^{1,1}${\rm :}] 
\[
\bp(4,1)(0,-1)
\put(0,0){\vir}  \put(0,0){\lijn}
\put(1,0){\ci1}  \put(1,0){\keten}
\put(3,0){\ci1} \put(3,0){\lijn}
\put(4,0){\vir}
\ep
\]
\item[${}^{\it I}\!A_n^{2,k}${\rm :}] 
\[
\bp(9,1)(-1,-1)
\put(-1,0){\vir}  \put(-1,0){\lijn}
\put(0,0){\ci2}  \put(0,0){\keten}
\put(2,0){\ci{k-1}} \put(2,0){\lijn}
\put(3,0){\ci k}  \put(3,0){\keten}
\put(3,0){\line(0,-1){1}} \put(3,-1){\vir}
\put(5,0){\ci {k}}
\put(5,-.11){\makebox(0,0)[t]{$\downarrow$}} \put(5,0){\lijn}
\put(6,0){\ci {k-1}}  \put(6,0){\keten}
\put(8,0){\ci1}
\ep
\]
\item[${}^{\it II}\!A_n^{k,2}${\rm :}] 
\[
\bp(9,1)(0,-1)
\put(0,0){\ci1}  \put(0,0){\keten}
\put(2,0){\ci{k-1}} \put(2,0){\lijn}
\put(3,0){\ci k}  \put(3,0){\keten}
\put(3,0){\line(0,-1){1}} \put(3,-1){\vir}
\put(5,0){\ci {k}}
\put(5,-.11){\makebox(0,0)[t]{$\downarrow$}} \put(5,0){\lijn}
\put(6,0){\ci {k-1}}  \put(6,0){\keten}
\put(8,0){\ci2} \put(8,0){\lijn}
\put(9,0){\vir}
\ep
\]

\item[$D_{2k+1}^{k+1,2}${\rm :}] 
\[
\bp(6,1)(0,-1)
\put(0,0){\vir}  \put(0,0){\lijn}
\put(1,0){\ci {k+1}} \put(1,0){\lijn}
\put(1,-.11){\makebox(0,0)[t]{$\downarrow$}}
\put(2,0){\ci {2k}}  \put(2,0){\lijn}
\put(3,0){\ci {2k-1}}  \put(3,0){\keten}
\put(2,0){\line(0,-1){1}} \put(2,-1){\cirr {k}}
\put(5,0){\ci 2} \put(5,0){\lijn}
\put(6,0){\vir}
\ep
\]
\item[$D_{2k}^{k,2}${\rm :}] 
\[
\bp(6,1)(0,-1)
\put(0,0){\vir}  \put(0,0){\lijn}
\put(1,0){\ci k} \put(1,0){\lijn}
\put(2,0){\ci {2k-1}}  \put(2,0){\lijn} 
\put(3,0){\ci {2k-2}}  \put(3,0){\keten}
\put(2,0){\line(0,-1){1}} \put(2,-1){\cirr k}
\put(1.91,-1){\makebox(0,0)[r]{$\leftarrow$}}
\put(5,0){\ci 2} \put(5,0){\lijn}
\put(6,0){\vir}
\ep
\]
\end{list}
\end{table}
\begin{table}\caption{RDP-configurations, attached to three curves}\label{tableC}
\begin{list}{}{}
\item[$A_n^{2,k,2}${\rm :}] 
\[
\bp(10,1)(-1,-1)
\put(-1,0){\vir}  \put(-1,0){\lijn}
\put(0,0){\ci2}  \put(0,0){\keten}
\put(2,0){\ci{k-1}} \put(2,0){\lijn}
\put(3,0){\ci k}  \put(3,0){\keten}
\put(3,0){\line(0,-1){1}} \put(3,-1){\vir}
\put(5,0){\ci {k}}
\put(5,-.11){\makebox(0,0)[t]{$\downarrow$}} \put(5,0){\lijn}
\put(6,0){\ci {k-1}}  \put(6,0){\keten}
\put(8,0){\ci2} \put(8,0){\lijn}
\put(9,0){\vir}
\ep
\]
\end{list}
\end{table}
\begin{theorem}
A maximal rational double point configuration on a rational
graph with almost reduced fundamental cycle occurs
in Tables \ref{tableA}, \ref{tableB} or \ref{tableC}.
\end{theorem}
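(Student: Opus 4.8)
The plan is to analyse a maximal connected set $C$ of $(-2)$-vertices together with the way it is glued into the ambient rational graph $\Gamma$, and to reduce the classification to a finite local computation on each Dynkin type. First I would record the combinatorial shape of $C$: being a connected negative definite tree all of whose vertices carry weight $-2$, it is one of the simply-laced Dynkin diagrams $A_n$, $D_n$, $E_6$, $E_7$, $E_8$ (classical). By maximality every vertex of $\Gamma$ adjacent to $C$ but outside $C$ is a non-$(-2)$-curve, and by the almost reduced hypothesis each such curve has coefficient $1$ in $Z$. The relevant \emph{attachment data} is therefore the map $a\colon C\to\Z_{\ge0}$ sending $E_i\in C$ to its number of non-$(-2)$-neighbours (each a single transverse edge, since $p_a(E_i)=0$ and the exceptional set has normal crossings).

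Next I would pin down the multiplicities. Decreasing $Z$ on $C$ only makes $Z\cdot F$ more negative at the attached curves $F$ (which retain coefficient $1$) and so cannot spoil anti-nefness there; hence global minimality of $Z=\inf\cE^+$ forces $Z|_C$ to be the \emph{smallest} effective cycle on $C$ satisfying $2z_i\ge\sum_{E_j\sim E_i,\ E_j\in C}z_j+a_i$ for every $i$, with $z_i\ge1$ by $A\ge E$. On a chain this says $z$ is concave and that the slope drops by at least $a_i$ across each attachment vertex, while the free-end condition $2z\ge(\text{neighbour})$ at the unattached leaves pins the profile down completely; on $D$ and $E$ the trivalent vertex already forces $z>1$ before any attachment, its baseline value being the highest root. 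Since the minimal anti-nef cycle is unique, $Z|_C$ is determined by $(C,a)$, and one reads off the characteristic ``tent'' profiles of the tables.

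The heart of the matter is to decide which pairs $(C,a)$ occur, and here the binding constraint is rationality. By the remark opening Section~\ref{arfc}, $Z$ can be built from the reduced cycle $E$ by adjoining only curves of the configurations, one configuration at a time; along such a computation sequence Laufer's criterion demands $Z_k\cdot E_{j(k)}=1$ at every step, because $p_a(Z_{k+1})=p_a(Z_k)+Z_k\cdot E_{j(k)}-1$ and $p_a(Z)=0$ force each increment to vanish. Concretely, the tent of $C$ must be reachable by successive additions of single $(-2)$-curves, each meeting the partial cycle in exactly one point. The global bookkeeping is controlled by $-Z^2=Z\cdot K+2$ with $Z\cdot K=\sum z_i(b_i-2)$: the weighted slack $\sum z_i(-Z\cdot E_i)$ produced along the unloaded parts of $C$ must exactly offset the contribution of the attached curves. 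For $A_n$, and for $E_6$ and $E_7$ loaded at an arm end, the slack $-Z\cdot E_i=1$ appearing at a free end (respectively at the loaded vertex) supplies precisely the required amount; for $E_8$, whose highest root saturates every anti-nef inequality, no admissible attachment produces compensating slack, so $E_8$ never occurs. The same accounting caps the number of attachments: each additional source increases the genus debt, and beyond the listed patterns one is forced to a step with $Z_k\cdot E_{j(k)}\ge2$, so that three attachments survive only on type $A$, yielding the single entry of Table~\ref{tableC}.

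The main obstacle is exactly this exhaustive bookkeeping in the last step: proving that \emph{no} placement of attachments other than those tabulated keeps every computation-sequence intersection equal to $1$. I would organise it by the valence of $C$ --- chains first, then the fork of $D_n$ and the trivalent vertices of $E_6,E_7$ --- treating for each type its maximal admissible attachment pattern and showing that any enlargement (an extra attachment, a further loaded arm, or passage to $E_8$) produces either a non-$(-2)$-curve of coefficient $\ge2$, contradicting almost reducedness, or a positive genus increment, contradicting $p_a(Z)=0$. The finitely many surviving profiles are precisely the entries of Tables~\ref{tableA}--\ref{tableC}.
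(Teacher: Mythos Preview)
Your framework is sound --- classify $C$ as an $ADE$ diagram, record the attachment function $a$, and use Laufer's criterion to bound the admissible pairs $(C,a)$ --- and your characterisation of $Z|_C$ as the minimal cycle with $2z_i\ge\sum_{j\sim i}z_j+a_i$, $z_i\ge1$, is correct and pleasant. But the execution in the last step is vaguer than it needs to be, and contains one factual slip. The paper's proof is much shorter because it isolates two crisp local rules that you never quite state:
\begin{itemize}
\item[(a)] at most one vertex of $C$ can have valency $\ge3$ in $\Gamma$ (and none valency $\ge4$);
\item[(b)] a non-$(-2)$ can be attached only at a vertex whose coefficient in the fundamental cycle $Z_C$ of $C$ \emph{alone} equals $1$.
\end{itemize}
Rule (b) is what really kills $E_8$ and restricts $D_n$, $E_6$, $E_7$ to their short lists. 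Your justification for excluding $E_8$ --- that the highest root ``saturates every anti-nef inequality'' --- is wrong: the leftmost vertex of $E_8$ (multiplicity $2$) has $Z_C\cdot E_i=-1$. The correct argument is direct: if $F$ is attached at $E_i$ with $Z_C$-multiplicity $m_i$, then $p_a(F+Z_C)=p_a(F)+p_a(Z_C)+F\cdot Z_C-1=m_i-1$, so $m_i\ge2$ produces an effective cycle of positive arithmetic genus and the graph is not rational. Since every vertex of $E_8$ has $m_i\ge2$, no attachment is possible; for $E_6$ and $E_7$ only the multiplicity-$1$ ends survive, and for $D_n$ only the three ends. Rule (a) then forbids a second branching on $D$ and $E$ and forces at most one interior attachment on $A_n$; a short check (e.g.\ two attachments on the ends of $E_6$, or all three ends of $D_n$) shows the remaining exclusions.

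Once (a) and (b) are in hand, the enumeration is mechanical and your ``slack accounting'' becomes unnecessary. I would rewrite your final paragraph around these two rules rather than the weighted-slack heuristic; the latter is not wrong in spirit, but it does not by itself pin down the list, whereas (a)+(b) do.
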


\begin{proof}
By rationality at most one vertex in a rational double
point configuration can have valency three in the resolution graph.
Furthermore, a non-$(-2)$ can only be attached to a vertex with
multiplicity one in the fundamental cycle of the rational double point.
One then computes for a graph satisfying these restrictions the fundamental
cycle. The lists show that all possibilities occur.
\end{proof}

\begin{remark}
The list of configurations attached to two curves is obtained  
from the list of Table \ref{tableA} 
by replacing a vertex
with multiplicity one by a non-$(-2)$.
\end{remark}

The numbers on the graphs in the Tables
indicate  the coefficients in the fundamental cycle. The
squares are not part of the configuration, but stand for the non-$(-2)$'s,
to which the configuration is attached. The arrow indicates the curve
which intersects the fundamental cycle strict negatively.

Our notation is a combination of that in \cite{st-p} and 
Gustavsen's naming scheme \cite{gu}, which is based
on that of De Jong \cite{theo}, who gave  the list of 
Table \ref{tableA}, 
of configurations attached to  only one curve. 
Our ${}^{I}\!D_k^2$ is called $D^{\rm I}_k$ there.
Our upper indices give the multiplicity at the vertices, which are
connected to non-($-2$)'s. For the $D$-cases we could do
without the upper left $I$ or $I\!I$, except that $D^2_5$ can have two meanings.

By blowing down all RDP-configurations on the minimal resolution
$M\to X$ one obtains the \textit{canonical model}, or  \textit{RDP-resolution},
$\hat X\to X$.
The only singularities
of $\hat X$ are rational double points. 
The reduced exceptional set has two types of singularities,
normal crossing of two curves, and three curves intersecting
transversally in one point. The last case occurs
for an $A_n^{2,k,2}$ -configuration.
Again one can form a dual
graph $\hat \Gamma$, 
which in this case is a hypertree with edges for the normal
crossing points and T-joints for three curves meeting in one point.
The canonical model does not determine the 
multiplicities of the fundamental cycle on the minimal
resolution. Therefore we add this multiplicity as 
second weight (we do not write the weight  if it is 
equal to 1).

We want to draw ordinary graphs. Observe that 
given a hypertree $\hat\Gamma$  for a canonical model,
there exists a smallest ordinary
tree (i.e.,  having minimal number of
vertices) giving rise to this hypertree: one replaces each 
$T$-joint by  an $A_1^{2,2,2}$-configuration, 
i.e., by a single  $(-2)$-vertex.

\begin{table}[h]\caption{Minimal representatives up to degree 6}\label{uptillsix}
$
\begin{array}{cccc}
\quad m=3\quad & m=4 & m=5 &m=6\\[\medskipamount]
\bpn \put(0,0){\vi{-3}}\ep &
\bpn \put(0,0){\vi{-4}}\ep &
\bpn \put(0,0){\vi{-5}}\ep &
\bp(0,.5) \put(0,0){\vi{-6}}\ep \\
&\bp(1,0) \put(0,0){\vi{-3}}\put(0,0){\lijn}\put(1,0){\vi{-3}}\ep &
\bp(1,0) \put(0,0){\vi{-3}}\put(0,0){\lijn}\put(1,0){\vi{-4}}\ep &
\bp(1,.7) \put(0,0){\vi{-3}}\put(0,0){\lijn}\put(1,0){\vi{-5}}\ep \\
&&&
\bp(1,.7) \put(0,0){\vi{-4}}\put(0,0){\lijn}\put(1,0){\vi{-4}}\ep\\
&&
\bp(2,0) \put(0,0){\vi{-3}}\put(0,0){\lijn}\put(1,0){\vi{-3}}
\put(1,0){\lijn}\put(2,0){\vi{-3}}\ep&
\bp(2,.7) \put(0,0){\vi{-3}}\put(0,0){\lijn}\put(1,0){\vi{-3}}
\put(1,0){\lijn}\put(2,0){\vi{-4}}\ep\\
&&&
\bp(2,.7) \put(0,0){\vi{-3}}\put(0,0){\lijn}\put(1,0){\vi{-4}}
\put(1,0){\lijn}\put(2,0){\vi{-3}}\ep\\
&&
\bp(2,0)(0,-1) \put(0,0){\vi{-3}}\put(0,0){\lijn}\put(1,0){\cir}
\put(1,0){\lijn}\put(2,0){\vi{-3}}
\put(1,0){\line(0,-1){1}} \put(1,-1){\vit{-3}}\ep
&
\bp(2,1.7)(0,-1) \put(0,0){\vi{-3}}\put(0,0){\lijn}\put(1,0){\cir}
\put(1,0){\lijn}\put(2,0){\vi{-3}}
\put(1,0){\line(0,-1){1}} \put(1,-1){\vit{-4}}\ep
\\
&&&
\bp(3,.7) \put(0,0){\vi{-3}}\put(0,0){\lijn}\put(1,0){\vi{-3}}
\put(1,0){\lijn}\put(2,0){\vi{-3}}
\put(2,0){\lijn}\put(3,0){\vi{-3}}\ep
\\
&&&
\bp(2,1.7)(0,-1) \put(0,0){\vi{-3}}\put(0,0){\lijn}\put(1,0){\vi{-3}}
\put(1,0){\lijn}\put(2,0){\vi{-3}}
\put(1,0){\line(0,-1){1}} \put(1,-1){\vit{-3}}\ep
\\
\hskip30pt &\hskip 50pt &\hskip 80pt &
\bp(3,1.7)(0,-1) \put(0,0){\vi{-3}}\put(0,0){\lijn}\put(1,0){\cir}
\put(1,0){\lijn}\put(2,0){\vi{-3}}
\put(1,0){\line(0,-1){1}} \put(1,-1){\vit{-3}}
\put(2,0){\lijn}\put(3,0){\vi{-3}}\ep
\end{array}
\qquad\qquad$
\end{table}

In  Table \ref{uptillsix} we list the graphs of the minimal
representatives up to degree $m=6$.
Such a graph has to have an almost reduced fundamental cycle.
The necessary and sufficient condition is that 
for a non-$(-2)$ vertex $E_i$
the sum of its valency $v(i)$ and the number of $(-2)$'s
attached to it, is  at most $b_i$.

\begin{classification}[of graphs with almost reduced fundamental cycle]
First classify all hypergraphs  of  RDP-resolutions
with all multiplicities equal to 1, and 
canonical degree $\sum (b_i-2)=m-2$. Each hypertree with
$b_i$ at least the valency of $E_i$ occurs.
Let then $\hat\Gamma$ be such a hypergraph.  
Replace a T-joint by an $A_n^{2,k,2}$
configuration,  replace any number of edges by 
configurations from Table \ref{tableB}  and attach configurations
from Table \ref{tableA} to vertices, in such a way that the
total multiplicity in the fundamental cycle
of the neighbours of any vertex $v_i$ does not exceed $b_i$.
The resulting graph is a rational graph with almost reduced
fundamental cycle, and all graphs can be obtained this way.
\end{classification}

\section{Complexity}
Lê and Tosun \cite{lt} used the number of rupture points (i.e.,
vertices with valency at least three, stars 
in the terminology of  \cite{la2}) 
as a measure of the complexity
of a rational graph. They showed that it is bounded 
in terms of the degree $m=-Z^2$ of the graph (that is, the multiplicity
of a corresponding rational singularity), more precisely
by $m-2$, if the
degree $m$ is at least 3. We give here a simplified proof for
a sharpened version. It shows that the most complex graphs are 
already obtained from singularities with reduced fundamental cycle.

\begin{defn}
The \textit{complexity} of a rational graph is the weighted number of
rupture points, where each rupture point is counted with its 
valency minus two as multiplicity.
\end{defn}

\begin{theorem}\label{comtheo}
The complexity of a rational graph of degree $m$ at least 3
is at most its canonical degree $m-2$.
\end{theorem}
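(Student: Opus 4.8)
The plan is to convert Theorem \ref{comtheo} into a statement about the number of \emph{leaves} (valency-one vertices) and then to compare that number with the multiplicity. First I would record two elementary identities. Since $\Gamma$ is a tree, $\sum_i(v(i)-2)=-2$, and splitting the vertices into leaves ($v(i)=1$), chain vertices ($v(i)=2$) and rupture points ($v(i)\ge 3$) turns this into
\[
\text{complexity}=\sum_{v(i)\ge3}(v(i)-2)=\ell-2,
\]
where $\ell$ is the number of leaves. On the other hand the canonical degree formula together with $p_a(Z)=0$ gives $m-2=Z\cdot K=\sum_i z_i(b_i-2)$. Hence the theorem is equivalent to the clean inequality $\ell\le m$: \emph{the number of ends of a rational graph is at most its multiplicity} (for $m\ge3$ one has $m-2\ge0$, so no degenerate cases interfere).

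The decisive auxiliary quantity is $\beta_i:=-Z\cdot E_i$, which is $\ge0$ because $Z\in\cE$. Expanding $Z\cdot E_i$ gives $\beta_i=b_iz_i-\sum_{j\sim i}z_j$, and summing yields $\sum_i\beta_i=\sum_i z_i(b_i-v(i))$, while $m=-Z^2=\sum_i z_i\beta_i$. In the \emph{reduced} case $z_i\equiv1$ the reduced cycle $E=\sum E_i$ is itself anti-nef, so $E\cdot E_i=v(i)-b_i\le0$, i.e.\ $v(i)\le b_i$ for every $i$; then each leaf satisfies $\beta_i=b_i-1\ge1$ and
\[
m=\sum_i\beta_i\ge\sum_{\text{leaves}}\beta_i\ge\ell .
\]
This settles the reduced case immediately, and a graph such as a $(-3)$-vertex carrying three $(-2)$-legs shows that equality $\ell=m$ is attained; this is the content of the sharpened assertion that the maximal complexity for a given $m$ is already realised by a graph with reduced fundamental cycle.

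The real work is the non-reduced case, which I expect to be the main obstacle. Here the naive estimate $m=\sum_i z_i\beta_i\ge\sum_{\text{leaves}}z_i\beta_i$ can fall short of $\ell$, precisely because a leaf may have $\beta_i=0$: then $z_{\mathrm{neighbour}}=b_iz_i$, so $Z$ strictly increases as one moves inward. Such ends occur exactly along the $(-2)$-legs of $D$- and $E$-configurations (and at the branch vertex of an $A$-configuration); equivalently they sit at $(-2)$ rupture points, where $z_i(b_i-2)=0$ contributes nothing on the right of $m-2=\sum z_i(b_i-2)$ although $v(i)-2>0$ still has to be paid for. To handle these I would pass to the RDP-resolution $\hat X\to X$: contracting each rational double point configuration removes exactly these $(-2)$-ends and $(-2)$ rupture points, leaves the intrinsic multiplicity $m$ unchanged, and produces a hypertree on which the reduced-type inequality $v(i)\le b_i$ is available. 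One then bounds $\ell$ by the number of ends of $\hat\Gamma$ plus the ends internal to the contracted configurations, and checks configuration by configuration, using Tables \ref{tableA}--\ref{tableC}, that each contributes to $m$ at least its own number of ends. The structural tool that makes this local count work is the behaviour of $Z$ along a maximal chain: its second difference equals $z_{w_{s+1}}-2z_{w_s}+z_{w_{s-1}}=(b_{w_s}-2)z_{w_s}-\beta_{w_s}$, which is $\le0$ on a $(-2)$-chain, so the coefficients are concave there and the admissible end-configurations are rigidly constrained. Tracking when every such local estimate is an equality is what forces the extremal examples back to reduced fundamental cycle.
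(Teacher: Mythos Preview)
Your reformulation is correct and useful: since $\Gamma$ is a tree, $\sum_i(v(i)-2)=-2$ gives complexity $=\ell-2$, so the theorem is exactly $\ell\le m$. Your argument in the reduced case is complete and is essentially the paper's Step~3, viewed from the leaf side rather than the rupture side.

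The gap is in the non-reduced case. Two of your steps do not go through as written. First, Tables~\ref{tableA}--\ref{tableC} classify maximal RDP-configurations only on graphs with \emph{almost reduced} fundamental cycle; in general the non-$(-2)$'s adjacent to a configuration may have $z_i>1$, the multiplicities on the configuration are then not those in the tables, and the later sections of the paper are devoted precisely to analysing what else can happen. So ``check configuration by configuration using Tables~\ref{tableA}--\ref{tableC}'' is not a proof of the general case. Second, the assertion that on $\hat\Gamma$ ``the reduced-type inequality $v(i)\le b_i$ is available'' is exactly the almost-reduced condition; for a non-$(-2)$ with $z_i>1$ the valency in $\Gamma$ (hence possibly in $\hat\Gamma$) can exceed $b_i$, since $Z\cdot E_i\le0$ only gives $\sum_{j\sim i}z_j\le b_iz_i$. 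You have not argued why the hypertree inherits the bound you need, nor how $\ell$ splits cleanly into ends of $\hat\Gamma$ plus ``internal'' ends with the right local contribution to $m$.

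The paper avoids this local analysis entirely by a different idea: it \emph{modifies the weights} while keeping the underlying tree fixed. The key lemma says that decreasing some $-b_i$ yields again a rational graph, with $Z$ reduced at the altered vertices. Step~1 increases $b_i$ by~$1$ at each non-$(-2)$ with $z_i>1$, obtaining an almost reduced graph; one checks $z_i(b_i-2)\ge b_i-1$, so the degree does not increase. Step~2 then, for each non-reduced RDP-configuration, makes its rupture point a $(-3)$ and simultaneously relaxes the adjacent non-$(-2)$'s by~$1$; again the degree does not increase (this is where $m\ge3$ is used). After both steps the underlying tree---hence the complexity---is unchanged, the fundamental cycle is reduced, and your own Step~3 finishes. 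This weight-modification trick is the missing idea in your proposal.
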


The proof uses the following observation  \cite[Thm. 8]{lt}. 

\begin{lemma}
The graph, obtained from a rational graph, by making some vertex weights
more negative, is again rational and the
fundamental cycle of the new graph is reduced at the changed
vertices.
\end{lemma}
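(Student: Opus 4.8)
The plan is to compare the two graphs through their genus functions. Write $\Gamma$ for the original rational graph and $\Gamma'$ for the graph obtained by replacing each $b_i=-E_i^2$ by $b_i'=b_i+\delta_i$ with $\delta_i\ge0$, the \emph{changed} vertices being those with $\delta_i>0$; denote by $\cdot'$ the new intersection product. The underlying tree and all off-diagonal intersection numbers are untouched, so I first record the elementary facts that carry over. The form stays negative definite, since for $A\neq0$ one has $A\cdot'A=A\cdot A-\sum_i\delta_i a_i^2<0$. Each vertex still has $p_a(E_i)=0$, so adjunction holds in both graphs and gives $E_i\cdot K=b_i-2$ and $E_i\cdot'K'=b_i'-2$. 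Since $\Gamma'$ is again a connected tree, the reduced cycle still satisfies $p_a'(E)=0$ (a tree identity independent of the weights), and because anti-nef positive cycles are $\ge E$, the new fundamental cycle obeys $Z'\ge E$, hence $z_i'\ge1$ for every $i$.

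The bridge between the two graphs is a comparison of genus functions. Starting from $p_a(A)=1+\frac12 A\cdot(A+K)$ and inserting the two adjunction values above, a direct computation for any effective cycle $A=\sum a_iE_i$ yields
\[
p_a'(A)=p_a(A)-\sum_i\delta_i\binom{a_i}{2}.
\]
Since $\binom{a_i}{2}\ge0$ for $a_i\ge0$, the new genus never exceeds the old one on effective cycles, and the defect is concentrated exactly at the changed vertices. This is the key identity, because it lets me evaluate the new genus of any cycle in terms of data on $\Gamma$.

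Next I would pin down $p_a'(Z')$ from two sides. For the lower bound, run a computation sequence for $\Gamma'$ starting from $E\le Z'$: at each step the increment of the genus is $p_a'(E_{j(k)})+Z_k'\cdot'E_{j(k)}-1=Z_k'\cdot'E_{j(k)}-1\ge0$, so $p_a'(Z')\ge p_a'(E)=0$. For the upper bound I import rationality of $\Gamma$: by Artin's characterisation \cite{ar}, $p_a(D)\le0$ for every effective cycle $D>0$ on $\Gamma$. Applying this to $D=Z'>0$ and using the comparison identity gives
\[
0\le p_a'(Z')=p_a(Z')-\sum_i\delta_i\binom{z_i'}{2}\le p_a(Z')\le0,
\]
so every inequality is in fact an equality.

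The two conclusions then drop out simultaneously. The equality $p_a'(Z')=0$ is precisely rationality of $\Gamma'$. The equality also forces $\sum_i\delta_i\binom{z_i'}{2}=0$; since $\delta_i>0$ at every changed vertex, $\binom{z_i'}{2}=0$ there, and together with $z_i'\ge1$ this gives $z_i'=1$, i.e.\ $Z'$ is reduced at the changed vertices. The main obstacle is the upper bound $p_a'(Z')\le0$: the two graphs carry genuinely different intersection forms, fundamental cycles and canonical classes, so there is no direct way to control the new fundamental cycle. The only handle I see is to evaluate the \emph{old} genus on $Z'$ and invoke Artin's inequality $p_a(D)\le0$; the comparison identity of the second paragraph is exactly what makes this transfer legitimate, and everything else is routine.
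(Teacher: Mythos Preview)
Your argument is correct, and it takes a genuinely different route from the paper's. The paper argues geometrically: blowing up smooth points on the curves $E_i$ with $\delta_i>0$ produces a (non-minimal) resolution of the original rational singularity whose graph contains $\Gamma'$ as the subgraph obtained by deleting the new $(-1)$-curves; rationality of $\Gamma'$ then follows from Laufer's criterion on the ambient graph, and the reducedness at the changed vertices comes from observing that each new $(-1)$-curve meets the computed cycle on $\Gamma'$ with multiplicity $z_i'$, which Laufer's criterion forces to equal $1$. Your proof stays entirely at the level of the quadratic lattice: the comparison identity $p_a'(A)=p_a(A)-\sum_i\delta_i\binom{a_i}{2}$ together with Artin's inequality $p_a(D)\le0$ on the old graph squeezes $p_a'(Z')$ between $0$ and $0$, and the vanishing of the defect term reads off $z_i'=1$ at the changed vertices directly. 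The paper's approach is shorter given that Laufer's criterion is already on the table and makes the geometric origin transparent; your approach avoids any mention of blow-ups, and the identity you isolate is a clean quantitative statement (the drop in arithmetic genus is exactly $\sum\delta_i\binom{a_i}{2}$) that explains in one line both conclusions simultaneously.
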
 

\begin{proof}
We can obtain the new graph as subgraph of the
graph of the resolution of the original singularity, blown up in
smooth points of the relevant exceptional curves. Its
fundamental cycle can be computed by first computing the fundamental
cycle of the subgraph. By Laufer's rationality criterion the
remaining curves intersect this cycle with multiplicity one.
\end{proof}

\begin{proof}[Proof of Theorem \ref{comtheo}]

Step 1: reduction to the case of almost reduced fundamental cycle.
Consider the cycle $Y$, which has multiplicity 1 at the non-$(-2)$'s
and multiplicities on the RDP-configurations as in  Tables
\ref{tableA}, \ref{tableB} and \ref{tableC}. 
A vertex $E_i$ with $E_i\cdot Y>0$ is a non-$(-2)$
and has coefficient $z_i>1$ in the fundamental cycle. For those
$E_i$ we increase $b_i$ by one. By the previous lemma we 
get the same underlying graph
with the same complexity, but with almost reduced fundamental cycle,
namely $Y$. The contribution of $E_i$ to the canonical degree
$Z\cdot K$ changes from $z_i(b_i-2)$ to $b_i-1$ and
$(b_i-1)-z_i(b_i-2)=1-(z_i-1)(b_i-2) \leq0$ with equality if and
only if $z_i=2$ and $b_i=3$. So the degree does not increase.

Step 2: reduction to the case of reduced fundamental cycle. Consider a
RDP-configuration, where $Z$ is not reduced. Make the self-intersection
of the unique rupture point in the configuration into $-3$. This
increases the canonical degree by 1. For all non-$(-2)$'s $E_j$ to
which the configuration is connected we increase the self-intersection
by 1 (decrease $b_j$ by 1). This decreases the canonical degree by at
least 1 (here we use that $m>2$). If $b_j$ was equal to 3, then $E_j$
might be connected to at most one other RDP-configuration, but
without rupture point. The result is a longer chain of $(-2)$'s.
Proceeding in this way we obtain without increasing the degree
the same underlying graph, but with reduced fundamental cycle.

Step 3. For a graph with reduced fundamental cycle the valency of a
vertex is at most $b_i$. So the complexity is bounded by
$\sum (b_i-2)=Z\cdot K$.
\end{proof}

\section{The format of a rational singularity}
%
%
If a singularity is not a hypersurface, its equations 
can be written in many ways, some of which have a special meaning.
The standard example is the 
cone over the rational normal curve of degree four,
whose equations are the minors of
\[
\begin{pmatrix}
z_0 & z_1 & z_2 & z_3 \\
z_1 & z_2 & z_3 & z_4
\end{pmatrix}\;,
\]
but also the $2\times2$ minors of the symmetric matrix
\[
\begin{pmatrix}
z_0 & z_1 & z_2  \\
z_1 & z_2 & z_3  \\
z_2 & z_3 & z_4
\end{pmatrix}\;.
\]
In fact, perturbing these matrices gives two different ways 
of deforming the singularity, leading to the two  components
of the versal deformation.  We say that we can write the
total spaces in a determinantal format.
In a naive
interpretation a format is a way of writing or coding
(efficiently) the equations of a singularity. Another point of
view is that we have a high-dimensional variety (like the
generic determinantal), from which the singularity is derived
by specialising  the equations.
This will lead us to the definition of a format, given by Ancus 
Röhr \cite{ro}.

\begin{defn}[\cite{buc}]
Let $Y\subset\C^N$ be a singularity.
A germ $X\subset\C^M$ is a {\sl singularity of type\/} $Y$, if there
exists a map $\phi\colon \C^M\to \C^N$, such that $\phi^*(Y)=X$,
which induces a complete intersection morphism $\phi\colon X\to
Y$.
\end{defn}

For a singularity $X$ of minimal multiplicity (in particular, for a rational
surface singularity) of multiplicity at least 3 the
existence of a complete intersection morphism $X\to Y$ already
implies that $X$ is of type $Y$ \cite[2.4.2]{ro}. The singularity $Y$
has the same minimal multiplicity. Indeed, $X$ is cut out by equations
with independent linear part, for otherwise the multiplicity increases.

{\sl Deformations of type\/} $Y$ of $X$ are obtained by unfolding
the map $\phi$: for every map $ \Phi \colon \C^M\times (S,0)\to \C^N$,
extending $\phi$, the map $\pi \colon  \Phi ^* Y \to (S,0)$ is
flat \cite[4.3.4]{buc}. 
In general such deformations will not fill out a component
of the deformation space, but one can turn the problem around
and start from the total space of the deformation over a smooth
component. This total space is then rigid \cite[p.~101]{st-d}.

A rational singularity has always a smoothing component
with smooth base space. This is the \textit{Artin component}, 
over which simultaneous
resolution exists after base change. This simultaneous resolution
is a versal deformation of the resolution $M$ of $X$.
A base change is not needed, if one considers instead 
deformations of the canonical model $\hat X\to X$. 

We therefore concentrate  on the Artin component.
As it is smooth, the singularity $X$  itself is cut out by
a regular sequence from the total space $Y$ of the deformation
over the Artin component. 
Therefore the singularity is of  type $Y$.
By a result of Ephraim \cite{eph} one can write every reduced singularity $Y$
in a unique way (up to isomorphism) 
as product of a singularity $F$ and a smooth germ
of maximal dimension. 

\begin{defn}[\cite{ro}]
The \textit{format} $F(X)$ of a rational surface singularity $X$
is the unique germ $F$ in a  decomposition $Y=F\times \C^k$, with $k$ maximal,
of the total space $Y$ over the Artin component of $X$.
\end{defn} 

Let $\hat\pi\colon (\hat X,\hat Z)\to (X,p)$ be the RDP-resolution of 
a rational singularity $X$ of multiplicity $m$; it can be obtained
by blowing up a canonical ideal. It gives an embedding of
$\hat X \hookrightarrow \P^{m-2}_X$ over $X$ and with it
an embedding of the exceptional set $\hat Z=\hat \pi^{-1}(p)$
in $\P^{m-2}$, as arithmetically Cohen-Macaulay scheme of
genus 0 and degree $m-2$ \cite[2.6.3]{ro}.
R\"ohr calls the cone over $\hat Z$ the \textit{canonical cone} of $X$.
One can also obtain $\hat Z$ by blowing up a canonical ideal
of $F$. This implies that the canonical cone of a 
rational surface singularity is determined up to isomorphy by its
format. We conjectured that the converse also holds.
This would imply that the singularities in Remark \ref{formint}
have the same format.

R\"ohr proves that quasi-determinantal singularities can be
recognised from the resolution graph \cite[Satz 4.2.1]{ro}.
The condition is that the graph contains the graph of a 
cyclic quotient singularity of the same multiplicity.
Equivalently one can say that the graph $\hat \Gamma$ of the
canonical model is a chain, with everywhere
multiplicity 1. The  proof is based on a criterion
for RDP-configurations to be deformed on the resolution
without changing the format \cite[Satz 3.3.1]{ro}. This
criterion also applies to rational singularities with almost reduced
fundamental cycle: all RDP-configurations can be deformed away,
except $A_1^{2,2,2}$. 
The graph of the resulting singularity is the minimal tree for the given
hypertree 
$\hat \Gamma$. Note that the canonical cone can have moduli, so also
the formats. The graph can therefore at most determine an equisingularity
class of formats.

\section{Computation of the fundamental cycle}\label{compfc}
In this section we describe, following R\"ohr \cite[1.3]{ro}, 
a special way to compute the fundamental cycle,
for a given rational graph.  We single out a vertex $E_0$,
which we cal \textit{central vertex}.
The computation is done in steps,  where each time the
multiplicity at $E_0$ is increased by one.

We decompose the complement of  a vertex $E_0$ in a rational graph $\Gamma$  
in irreducible components:
$\Gamma\setminus \{E_0\}= \cup_{i=1}^k\Gamma_i$. We suppose
that $k>1$; the case $k=1$ can be reduced to it by blowing up a
point of the curve $E_0$.

We construct the fundamental cycle inductively. To start with,
let $E_0+Y_i^{(1)}$ be the fundamental cycle on $\{E_0\}\cup\Gamma_i$;
as $k>1$, the support of  $Y_i^{(1)}$ is $\Gamma_i$: one can
compute $Z$ starting from $E_0+Y_i^{(1)}$, so the coefficient
at $E_0$ is one. Define
$Z^{(1)}=E_0+\sum Y_i^{(1)}$. Then $Z^{(1)}\cdot E_j\leq0$ for
all $j\neq0$.

Let $Z^{(s)}$ be constructed with $Z^{(s)}\cdot E_j\leq0$ for
all $j\neq0$, with coefficient $s$ at $E_0$ and satisfying
$Z^{(s)}\leq Z$. If $Z^{(s)}\cdot E_0\leq 0$, then $Z^{(s)}$ is
the fundamental cycle $Z$. Otherwise, consider the set of vertices  
$E_{i,j}\in\Gamma_i$ 
such that $Z^{(s)}\cdot E_{i,j}=0$ and let $\Gamma_i^{(s+1)}$ be the
connected component of this set adjacent to $E_0$. Let $E_0+Y_i^{(s+1)}$
be the fundamental cycle on $\{E_0\}\cup\Gamma_i^{(s+1)}$. As
$Y_i^{(s+1)}\leq Y_i^{(1)}$, the support of $Y_i^{(s+1)}$ does
not contain $E_0$. Now define
\[
Z^{(s+1)}=Z^{(s)}+ E_0+\sum Y_i^{(s)}\;.
\]
Then $Z^{(s+1)}\cdot E_j\leq0$ for all $j\neq0$, the coefficient at
$E_0$ is $s+1$ and $Z^{(s+1)}\leq Z$; indeed $Z^{(s+1)}$ can be
constructed from $Z^{(s)}$ by first adding $E_0$ and then
continuing in the manner of a computation sequence without ever
adding $E_0$ again.

This construction ends with the fundamental cycle.

If $k=1$, we  blow up a point of the curve $E_0$,  introducing a
$\Gamma_2$. But this can be avoided, as in fact  
the same description as above holds, with the only difference that
for $k=1$ the cycle $E_0+Y_1^{(s)}$ is not the fundamental cycle 
on $\{E_0\}\cup\Gamma_1 ^{(s)}$ (in particular,
 $E_0+Y_1^{(1)}$ is not the fundamental cycle on $\Gamma$),
but $Y_1^{(s)}$ is the cycle constructed from $Z^{(s-1)}+E_0$ in the manner of a
computation sequence without ever adding $E_0$.

Let $m_i^{(s)}\leq m_i^{(1)}$ be the coefficient of $Y_i^{(s)}$ at the
vertex in $\Gamma_i$ adjacent to $E_0$. As $E_0\cdot Z^{(s)}=1$ for $s<l$,
 where $Z^{(l)}=Z$ is the last step of the
computation, 
we have $\sum _i m_i^{(1)}=b_0+1$, $\sum _i m_i^{(s)}=b_0$ for $1<s<l$ and
$\sum _i m_i^{(l)}<b_0$. 

\roep Example. Consider an $E_6$-configuration, connected
to a non-$(-2)$ vertex $E_0$. We compute the
$Z^{(s)}$. We only write the multiplicities of $E_0$ (in boldface) and 
of the
irreducible components of the configuration.
\[
\begin{matrix}\bf 1 &2&3&4&3&2\\
           &  & & 2\end{matrix}
           \qquad
\begin{matrix}\bf2 &4&5&6&4&2\cr
         &  & & 3\end{matrix}
           \qquad
\begin{matrix}\bf3 &4&5&6&4&2\cr
         &  & & 3\end{matrix}
           \]
The sequence $(m_1^{(s)})$ is $(2,2,0)$ and therefore an
$E_6$-configuration can only be connected to a curve  with
multiplicity at most 3. We observe that the same sequence can be
obtained from $2A_2^1$, two chains of length two.

\section{One non-reduced curve}
\label{onrnmt}
The goal of this section is to give the elements for the classification of rational
graphs, where each RDP-configuration is attached to at most one 
non-reduced non-$(-2)$-vertex. 
We first  classify the possible multiplicities at RDP-configurations.
These depend only on the multiplicity of the non-$(-2)$,
and the computation can again be done for each
configuration separately.
The candidates for graphs of RDP-resolutions can be found from the
graphs with almost reduced fundamental cycle, but not every
candidate arises from a rational graph.

Let $E_0$ be a non-reduced non-$(-2)$, with multiplicity
$z_0$ in the fundamental cycle. According to the  previous
section, we can compute the fundamental cycle in
$z_0$ steps, each time increasing the multiplicity of $E_0$ by
one. We add cycles with support on the subgraphs $\Gamma_i$
and each subgraph gives a multiplicity sequence
$(m_i^{(1)},\dots,m_i^{(z_0)})$.
These multiplicities satisfy 
\[\sum _i m_i^{(1)}=b_0+1, 
\qquad\sum _i m_i^{(s)}=b_0\text{ for }1<s<z_0, \qquad
\sum _i m_i^{(z_0)}<b_0.
\] 
After the first step we add only cycles with support in 
RDP-configurations intersecting $E_0$, as all other
non-$(-2)$-curves, intersecting such configurations,
have multiplicity one. 
Each $\Gamma_i$ contains at most one RDP-configurations adjacent $E_0$.
We include the case that there is no such configuration by calling it
$A^{1,1}_0$.

For each RDP-configuration from Tables \ref{tableA},  \ref{tableB}
and \ref{tableC} we compute the \textit{multiplicity
sequence} $(m^{(1)},\dots,m^{(j)})$. The multiplicities satisfy
$m^{(1)}-1\leq m^{(s)} \leq m^{(1)}$ for all $s<j$.
We abbreviate a sequence $k,\dots,k$ of $l$ equal 
multiplicities as $k^l$. An exponent $l=0$ means that this factor is absent.
If the sequence is infinite, and repeating itself,
we underline the repeating section. So in Table \ref{tableBB}
the entry $(1^{n+1},\underline{0,1^n})$ for
$LA_n^{1,1}$ should be read as
$(1^{n+1},0,1^n,0,1^n,0,\dots)$. The case $n=0$, of two non-$(-2)$'s
intersecting each other, is included. The sequence is then $(1,0,0,\dots)$.

For configurations between several vertices
only one of the non-$(-2)$'s has higher multiplicity, and
we suppose  that the other ones have 
sufficiently negative self-intersection for the graph being rational.

We have to distinguish which of the two or three attached vertices
is the non-reduced one. 
We always draw the graphs as in Tables \ref{tableB} and \ref{tableC}.  
In a graph of type
${}^{\it I}\!A_n^{2,k}$,  ${}^{\it II}\!A_n^{k,2}$ or
$A_n^{2,k,2}$ the arrowhead (which indicates the curve intersecting
the fundamental cycle of the extended configuration negatively) is
on the right hand side of the graph. So it makes sense to
distinguish between the left, middle or right attached vertex.
We denote this by writing an $L$, $M$ or $R$ before the name.
For type $D$ we use $L$ and $R$.

It is possible to obtain a multiplicity sequence from different
configurations or combinations of configurations. We then speak
about equivalent configurations. For each configuration we also
determine the simplest equivalent combination of configurations.

\begin{prop} \label{proponmt}
The multiplicity sequences and the equivalent
configurations for  the configurations of Table \ref{tableA}
are as given in  Table \ref{tableAA}.
The different cases arising from the configurations of Table \ref{tableB}
are  in   Table \ref{tableBB}; it gives also the multiplicity
at the component attached to the other, reduced non-$(-2)$.
If the sequence is infinite, the multiplicity after step $s$
of the computation is given.
 Table \ref{tableCC}
gives the results for $A_n^{2,k,2}$.
\end{prop}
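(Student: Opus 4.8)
The plan is to treat Proposition \ref{proponmt} as a bookkeeping computation carried out uniformly for every configuration in Tables \ref{tableA}, \ref{tableB} and \ref{tableC}, using the inductive construction of the fundamental cycle from Section \ref{compfc}. The central vertex throughout is the non-reduced non-$(-2)$ vertex $E_0$, and the object to be computed is, for each configuration sitting on one of the subgraphs $\Gamma_i$ adjacent to $E_0$, the multiplicity sequence $(m^{(1)},m^{(2)},\dots)$, where $m^{(s)}$ is the coefficient at the vertex of $\Gamma_i$ adjacent to $E_0$ in the cycle $Y_i^{(s)}$ produced at the $s$-th step. The key structural input is that at each step one computes $E_0+Y_i^{(s)}$ as the fundamental cycle (or the computation-sequence cycle, if $k=1$) on $\{E_0\}\cup\Gamma_i^{(s)}$, where $\Gamma_i^{(s)}$ is the connected component, adjacent to $E_0$, of the set of vertices on which the previous cycle intersects trivially. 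So the entire calculation reduces to computing fundamental cycles on an RDP-configuration with one extra vertex $E_0$ carrying increasing multiplicity.

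First I would fix notation for a single configuration and run Laufer's computation sequence on $\{E_0\}\cup\Gamma_i$ once, recording $Y_i^{(1)}$; these are exactly the fundamental-cycle multiplicities displayed in Tables \ref{tableA}--\ref{tableC}, so $m^{(1)}$ is read off directly. Then I would iterate: given $Z^{(s)}$, identify $\Gamma_i^{(s+1)}$ as the maximal sub-chain (or sub-star) on which $Z^{(s)}$ has intersection zero and recompute the fundamental cycle on $\{E_0\}\cup\Gamma_i^{(s+1)}$. The crucial quantitative observation, which I would prove once in general, is the constraint $m^{(1)}-1\le m^{(s)}\le m^{(1)}$ stated just before the proposition: raising the coefficient at $E_0$ by one can only decrease the relevant component $\Gamma_i^{(s+1)}$, and on a smaller configuration the adjacent multiplicity drops by at most one. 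This monotonicity forces the sequences to be eventually periodic, which is why the underlined-period notation from Table \ref{tableBB} is well-defined; I would verify periodicity by noting that the finite set of possible truncated sub-configurations $\Gamma_i^{(s)}$ must repeat.

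The calculations themselves split by type. For type $A_n$ configurations (the cases $A_n^k$, $A_n^{1,1}$, ${}^{I}\!A_n^{2,k}$, ${}^{II}\!A_n^{k,2}$, $A_n^{2,k,2}$) the fundamental cycle on a chain with a pendant $E_0$ is linear, so the multiplicity sequences are the periodic patterns of Tables \ref{tableAA}--\ref{tableCC}; the $L/M/R$ distinction affects only which end carries the arrowhead, hence which attached vertex records the residual multiplicity. For types $D$ and $E$ the sequences terminate quickly, matching the \emph{Example} in Section \ref{compfc}, where an $E_6$-configuration gives $(2,2,0)$; the same computation exhibits the equivalence of $E_6^2$ with $2A_2^1$, and I would list each such equivalent combination by comparing the two multiplicity sequences termwise. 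The equivalence claims amount to observing that distinct configurations producing identical sequences $(m^{(s)})$ contribute identically to the budget equations $\sum_i m_i^{(1)}=b_0+1$, $\sum_i m_i^{(s)}=b_0$, $\sum_i m_i^{(z_0)}<b_0$ from Section \ref{compfc}, and hence are interchangeable as far as the central vertex is concerned.

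The main obstacle is not any single computation but the uniformity and completeness of the casework: one must be certain that the connected component $\Gamma_i^{(s+1)}$ has been correctly identified at every step, since a miscount of where $Z^{(s)}$ fails to be anti-nef propagates through the whole sequence, and that the periodicity has genuinely set in rather than merely appearing to. For the $D_{2k}$ and $D_{2k+1}$ families the position of the arrow (marking the strictly negative intersection) shifts the branch that shrinks first, so the $L$ and $R$ variants must be computed separately and compared against the entries giving the multiplicity at the second attached curve. I would therefore organise the proof as a table-by-table verification, presenting the representative computation for one $A$, one $D$ and the $E_6$ case in detail and asserting that the remaining entries follow by the identical mechanism, since each is a finite fundamental-cycle computation on an explicitly given graph.
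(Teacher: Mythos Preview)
Your overall plan is correct and matches the paper's: the proposition is a bookkeeping verification carried out configuration by configuration with the machinery of Section~\ref{compfc}, and the paper likewise works out one representative case in detail (it chooses $A_n^k$ for $k>1$, writing $n=lk+r+(k-1)$ and tracking the multiplicities step by step) and declares the remaining entries to be similar or easier. Your proposed organisation---one $A$, one $D$, the $E_6$ example already in Section~\ref{compfc}---is the same strategy with a slightly different sample.

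There is, however, one concrete error in your outline. You propose to prove the inequality $m^{(1)}-1\le m^{(s)}\le m^{(1)}$ ``once in general'' by arguing that the sub-configurations $\Gamma_i^{(s)}$ shrink monotonically as $s$ increases. They do not. Already for $A_n^1$ the sequence is $(\underline{1^n,0})$: at step $n+1$ the set $\Gamma_i^{(n+1)}$ is empty, but at step $n+2$ it is the full chain again, because after adding $E_0$ every vertex of the chain once more has intersection zero with the running cycle. The same phenomenon produces the second block of $2$'s in the $A_{2l+2}^2$ sequence $(2^l,1,1,2^l,0)$. What is true is $Y_i^{(s)}\le Y_i^{(1)}$, which gives the upper bound $m^{(s)}\le m^{(1)}$, but the lower bound does not follow from any monotonicity of $\Gamma_i^{(s)}$ and in fact the paper simply records the inequality as an empirical observation about the tables rather than using it in the proof. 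Since your actual proof is the direct table-by-table computation, this inequality is not load-bearing; just drop the attempted general justification and let the inequality emerge from the casework, as the paper does.
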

\begin{table}[h]\caption{}
\label{tableAA}
\renewcommand{\arraystretch}{1.2}
$
\begin{array}{llc}
\mbox{name} &\mbox{mult sequence}   &  \mbox{equivalent to}\\
\hline
\\[-1.2\bigskipamount]
A^1_n &  (\underline{1^n,0}) & \\
A^k_{(l+1)k+r-1}, \; r <k-1  &(k^l,r) & (k-r)A_l^1+rA^1_{l+1} \\
A^2_{2l+2} & (2^l,1,1,2^l,0) & \\
A^k_{(l+2)k-2}, \; k>2  &(k^l,k-1,1) &A_l^1+(k-1)A^1_{l+1}\\
^{\it I}\!D_k^2     & (2,0)&   2A_1^1 \\
^{\it II}\!D_{2k}^k,  \; k>2  &  (k,0) &kA_1^1 \\
^{\it II}\!D_{5}^2 &(2,1,2,0)  & A_1^1+A_3^1 \\
^{\it II}\!D_{2k+1}^k, \; k>2  &(k,1) & (k-1)A_1^1+A_2^1 \\
E_6^2      &  (2,2,0) & 2A_2^1 \\
E_7^3      & (3,0) &  3A_1^1 \\ 
\hline
\end{array}
$
\end{table}

\begin{table} \caption{} \label{tableBB}
\renewcommand{\arraystretch}{1.2}
$
\begin{array}{llll}
\mbox{name}& \mbox{mult sequence}& \mbox{other mult}& 
\mbox{equivalent configuration} \\ 
\hline
\\[-1.2\bigskipamount]
 LA_n^{1,1}
  & (1^{n+1},\underline{0,1^n})&
  \lceil \frac{n+s}{n+1} \rceil 
\\
L^{\it I}\!A_n^{2,k} &(2,1^{n-k},0)& 
  n\!-\!k\!+\!2 & LA_0^{1,1}+A_{n-k+1}^1
\\
M^{\it I}\!A_{(l+1)(k-1)+r}^{2,k} 
& (k,(k-1)^{l-1},r)  & \lceil \frac{(l+1)(k-1)+r}{k-1} \rceil
&  LA_0^{1,1}\!+\!( k\!-\!1\!-\!r)A_{l}^1\!+\!rA_{l+1}^1\\
\quad k>2
\\
M^{\it II}\!A_{(l+1)k+r-2}^{k,2} 
 & (k^l,r) &2 & 
LA_{l-1}^{1,1} \!+\!rA_{l+1}^1\!+\!( k\!-\!1\!-\!r)A_{l}^1\\
\quad 0\leq r <k\!-\!1\\
M^{\it II}\!A_{(l+1)k+k-3}^{k,2} 
     & (k^l,k-1,1)  &3 & 
LA_{l-1}^{1,1}+(k-1)A_{l+1}^1\\
\quad k>2,\; l>1\\
M^{\it II}\!A_{3k-3}^{k,2}  
      & (k,k-1,1)  &3 & 
LA_{1}^{1,1}+A_1^1+(k-2)A_{2}^1\\
\quad k>2\\
M^{\it II}\!A_{2l+1}^{2,2}
  &(2^l,\underline{1^2,2^{l-1}})&
\lceil \frac{l+1+s}{l+1} \rceil 
&
LA_{l}^{1,1}+A_l^1
\\
R^{\it II}\!A_n^{k,2}
  &(2,1^{k-2},0) &k&
LA_0^{1,1}+A_{k-1}^1\\
\quad k>2\\
R^{\it II}\!A_n^{2,2} &(2,0) &2&
LA_0^{1,1}+A_{1}^1
\\
LD_{2k+1}^{k+1,2}& (k+1,0) & 2 & LA_0^{1,1}+kA_1^1
\\
RD_{2k+1}^{k+1,2}& (2,1,1,\dots)  & 
        \lceil \frac{2k+s}2 \rceil 
&
  LA_1^{1,1}+A_1^1
\\
LD_{2k}^{k,2}
   & (k,1) & 3 & (k-2)A_1^1+A_2^1+LA_0^{1,1}\\
\quad k>2\\
RD_{2k}^{k,2} &(2,1,1,\dots) & 
        \lfloor \frac{2k+s}2 \rfloor 
&
  LA_1^{1,1}+A_{1}^1
\\ 
\hline
\end{array}
$
\end{table}

\begin{table} \caption{} \label{tableCC}
\renewcommand{\arraystretch}{1.2}
$
\begin{array}{lllll}
\mbox{name}& \mbox{mult seq}& \mbox{mult at L}& 
\mbox{mult at M} &\mbox{mult at R}\\ 
\hline
\\[-1.2\bigskipamount]
LA_n^{2,k,2}& 
 (2,1^{n-k+1},0) && n-k+3 &2
\\
MA_n^{2,k,2} &(k,(k-1)^{l-1},r) &\lceil \frac{n+1}{k-1} \rceil && 2
\\
RA_n^{2,k,2} & (2,1^{k-2},0) & 2 & k\\
\hline
\\[-\bigskipamount]
\multicolumn{5}{c}{$Here
$n=(l+1)(k-1)-1+r$ with $0\leq r\leq k-2$ and $k>2$.$}
\end{array}
$
\end{table}

\begin{proof}
We do here only the case $A_n^k$, for  $k>1$, as the other cases involve similar or easier
computations. We write $n=lk+r+(k-1)$ with $l\ge1$ and $0\le r\le k-1$. This is possible
as the number $n$ satisfies $n\ge 2k-1$. 
There is a chain of
$lk+r-(k-1)=(l-1)k+r+1$ $(-2)$-vertices with multiplicity $k$ in $Z^{(1)}$,
and the end of this chain not intersecting $E_0$ 
intersects $Z^{(1)}$ negatively
(when $l=1$ and $r=0$ there is only one vertex with
multiplicity $k$; in this case the multiplicity sequence  is $(k,0)$
and the format is $kA_1^1$, in accordance with the general
formula). The set $\Gamma^{(2)}$ consists of
$(l-1)k+r+(k-1)$  vertices. If $l>1$ this number is at least $2k-1$ and
the multiplicities in $Z^{(2)}$ are 
\[
2,4,\dots,2k,2k,\dots,2k,
2k-1,2k-2,\dots,2,1\;.
\] 
There are $(l-2)k+r+1$ vertices with multiplicity
$2k$ in $Z^{(2)}$. We continue in this way until there are $r+1$
vertices with multiplicity $lk$ in $Z^{(l)}$; all multiplicities are
then 
\[
l,2l,\dots,lk,lk,\dots,lk, lk-1,lk-2,\dots,2,1\;.
\] 
The set
$\Gamma^{(l+1)}$ consists of $r+(k-1)$  vertices (except when $r=0$;
then $\Gamma^{(l+1)}$ is empty).
We therefore add the multiplicities 
\[
1,2,\dots,
r-1,r,\dots,r,r-1,\dots,2,1,0,\dots,0\;.
\]
If $r<k-1$ the sequence
stops here, the multiplicity sequence  is $(k^l,r)$ and the
equivalent configuration is $(k-r)A_l^1+rA_{l+1}^1$. 
If $r=k-1$ the multiplicities in
$Z^{(l+1)}$ are 
\[
l+1, 2(l+1),  \dots, (k-1)(l+1),k(l+1)-1,
k(l+1)-2,\dots, 2,1\;.
\]
We add the multiplicities
$0,\dots,0,1,\dots,1$. If $k\ge3$ the sequence stops here, the
multiplicity sequence is $(k^l,k-1,1)$ and the configuration is
equivalent to $A_l^1+(k-1)A_{l+1}^1$. If $k=2$, the sequence continues; as
$\Gamma^{(l+3)}$ consists of $n-1$  nodes,  the multiplicity sequence 
is $(2^l,1^2,2^l,0)$. 
There is no easier equivalent configuration for this $A^2_{2l+2}$.
\end{proof}

\begin{remark}
The condition $k>2$ in the tables is included to avoid duplications.
For example, as $ MA_n^{2,2,2}=LA_n^{2,2,2}$, we can assume that $ k>2$
for  $MA_n^{2,k,2}$.
\end{remark}

\begin{remark}
Note that the tables give the maximal multiplicity sequence
for each configuration. If the computation stops earlier (due to
other configurations), one gets a simpler equivalent singularity.
\end{remark}

\begin{cor}\label{cor_alleen_A}
Every RDP-configuration, attached to only one vertex,
is equivalent to a combination of configurations of type
$A_n^1$ and $A_{2l}^2$.
\end{cor}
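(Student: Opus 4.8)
The plan is to read off the result directly from Table \ref{tableAA}, which by Proposition \ref{proponmt} records, for every entry of Table \ref{tableA}, both its multiplicity sequence and an equivalent combination of configurations. Since equivalence of configurations means equality of multiplicity sequences, and the multiplicity sequence of a combination attached to the central vertex $E_0$ is the termwise sum of the sequences of the individual pieces (as in the computation of Section \ref{compfc}, where $\sum_i m_i^{(s)}=b_0$), it suffices to verify that each listed equivalent combination is built only from configurations of type $A_n^1$ and $A_{2l}^2$.

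First I would isolate the two primitive cases, which require no reduction. The configuration $A_n^1$ is already of the required type, and the configuration $A^2_{2l+2}$ is of type $A_{2l}^2$, since $2l+2=2(l+1)$ is even.

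For the remaining entries the third column of Table \ref{tableAA} exhibits an equivalent combination, and in every case this combination is a sum of configurations of type $A_n^1$ alone. Explicitly, $A^k_{(l+1)k+r-1}$ (with $r<k-1$) is equivalent to $(k-r)A_l^1+rA_{l+1}^1$; the case $A^k_{(l+2)k-2}$ (with $k>2$) to $A_l^1+(k-1)A_{l+1}^1$; ${}^{\it I}\!D_k^2$ to $2A_1^1$; ${}^{\it II}\!D_{2k}^k$ (with $k>2$) to $kA_1^1$; ${}^{\it II}\!D_5^2$ to $A_1^1+A_3^1$; ${}^{\it II}\!D_{2k+1}^k$ (with $k>2$) to $(k-1)A_1^1+A_2^1$; $E_6^2$ to $2A_2^1$; and $E_7^3$ to $3A_1^1$. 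Together with the two primitive cases this exhausts Table \ref{tableA}.

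There is no genuine obstacle here, as all the arithmetic is already carried out in establishing Table \ref{tableAA}; the only point worth stressing is \emph{why} the building block $A_{2l}^2$ must appear in the statement at all. The proof of Proposition \ref{proponmt} shows that the sequence $(2^l,1,1,2^l,0)$ of $A^2_{2l+2}$ admits no simpler equivalent combination, so it cannot be expressed through the $A_n^1$ alone. This is exactly the reason both families are needed, and it is the single fact on which the sharpness of the corollary rests.
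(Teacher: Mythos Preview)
Your argument is correct and is exactly the intended one: the paper gives no separate proof of this corollary, treating it as an immediate consequence of Table~\ref{tableAA}, and you have simply made that reading explicit by running through the entries. Your closing remark on why $A_{2l}^2$ cannot be dispensed with is a useful addition that the paper leaves implicit.
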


\begin{cor}
An RDP-configuration, attached to two or three vertices, of which only
one has  multiplicity greater than one in the fundamental cycle,
is equivalent to a combination of configurations  of type
$A_n^1$, $A_{2l}^2$, $LA_n^{1,1}$ and $LA_{n}^{2,2,2}$.
\end{cor}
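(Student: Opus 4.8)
The plan is to reduce the statement to the previous corollary (Corollary \ref{cor_alleen_A}) together with the contents of Tables \ref{tableBB} and \ref{tableCC}. The corollary to be proved asserts that every RDP-configuration attached to two or three vertices, with only one of them carrying multiplicity greater than one, is equivalent to a combination of the four listed building blocks $A_n^1$, $A_{2l}^2$, $LA_n^{1,1}$ and $LA_n^{2,2,2}$. Since ``equivalent'' here means having the same multiplicity sequence, and the multiplicity sequences (together with their equivalent combinations) for all the relevant configurations have already been recorded in Proposition \ref{proponmt}, the proof is essentially a matter of inspecting those tables and checking that each listed equivalent combination uses only the four allowed types.

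First I would go through Table \ref{tableBB} entry by entry. Most entries already give an equivalent combination built from $A_n^1$, $A_{2l}^2$ and $LA_n^{1,1}$ alone---for instance $L{}^{\it I}\!A_n^{2,k}$ is $LA_0^{1,1}+A_{n-k+1}^1$, and the various $M$- and $R$-cases are sums of an $LA_\bullet^{1,1}$ with chains $A_\bullet^1$. The one genuinely new type $LA_n^{2,2,2}$ only needs to appear for configurations attached to \emph{three} vertices, so for Table \ref{tableBB} (two attached vertices) I would confirm that no entry forces its use. The entries with infinite repeating sequences, namely $LA_n^{1,1}$ itself together with $M{}^{\it II}\!A_{2l+1}^{2,2}$, $RD_{2k+1}^{k+1,2}$ and $RD_{2k}^{k,2}$, require slightly more care: their equivalent combinations ($LA_l^{1,1}+A_l^1$, $LA_1^{1,1}+A_1^1$) again involve only $A_n^1$ and $LA_n^{1,1}$, so they too stay within the allowed list. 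The point to verify is that the periodic part of the multiplicity sequence is exactly matched by the periodic $LA_\bullet^{1,1}$ factor, while the finite chains $A_\bullet^1$ account for the remaining nonperiodic multiplicities.

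Next I would treat Table \ref{tableCC}, the three-vertex case $A_n^{2,k,2}$. Here the building block $LA_n^{2,2,2}$ becomes relevant precisely because one of the three attachment points sits on a $T$-joint, which the single-vertex and two-vertex blocks cannot reproduce. The $LA_n^{2,k,2}$, $MA_n^{2,k,2}$ and $RA_n^{2,k,2}$ multiplicity sequences listed there are of the same shape $(2,1^{\ast},0)$ or $(k,(k-1)^{l-1},r)$ already encountered, so for the $A_n^1$ and $A_{2l}^2$ contributions the verification parallels the one-vertex computation in the proof of Proposition \ref{proponmt}; what remains is to check that the $T$-joint structure is captured by a single $LA_\bullet^{2,2,2}$ factor together with the appropriate chains, so that the resulting combination uses only the four declared types.

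\textbf{The main obstacle} I anticipate is not conceptual but bookkeeping: one must confirm that in \emph{every} table entry the equivalent combination is literally assembled from the four permitted blocks, with no hidden appearance of a $D$- or $E$-type piece or of an $A_{2l+1}^2$ (odd lower index) factor. The substantive content is already encapsulated in Corollary \ref{cor_alleen_A} for the one-vertex case and in Proposition \ref{proponmt} for the multi-vertex cases; the periodic entries are the delicate ones, since there I must ensure the claimed $LA_n^{1,1}$-factor reproduces the correct period length and phase. Once those few periodic sequences are checked against the $\lceil\cdot\rceil$ and $\lfloor\cdot\rfloor$ formulas in the ``other mult'' and ``mult after step $s$'' columns, the corollary follows immediately by collecting the equivalences.
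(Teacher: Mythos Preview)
Your approach is essentially the paper's: read off the two-vertex case from Table~\ref{tableBB} (where the ``equivalent configuration'' column already does the work), and then handle $A_n^{2,k,2}$ separately. The plan is correct and the bookkeeping for Table~\ref{tableBB} is exactly as you describe.

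Where your outline is thinner than the paper is the three-vertex case. Table~\ref{tableCC} does \emph{not} contain an ``equivalent configuration'' column, so you cannot simply verify entries; you must produce them. The paper does this with two specific moves you do not mention. First, a parameter reduction: the multiplicity data of $LA_n^{2,k,2}$ depend only on $n-k$, so $LA_n^{2,k,2}$ is equivalent to $LA_{n-k+2}^{2,2,2}$ outright (no extra $A_\bullet^1$ summands needed). Second, and more to the point, two symmetry swaps: for $RA_n^{2,k,2}$ one first passes to the minimal $n=2k-3$, where the left and right arms have equal length, and then \emph{interchanges $L$ and $R$} to land in an $LA$-case; for $MA_n^{2,k,2}$ one writes down the combination $MA_\bullet^{2,2,2}+\text{(chains)}$ and then notes that swapping $M\leftrightarrow L$ turns $MA_n^{2,2,2}$ into $LA_n^{2,2,2}$. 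These relabelling arguments are what force everything into the single normal form $LA_\bullet^{2,2,2}$ rather than leaving residual $MA$- or $RA$-type pieces; your phrase ``the $T$-joint structure is captured by a single $LA_\bullet^{2,2,2}$ factor'' is the right conclusion, but the mechanism is these symmetries, not a parallel to the one-vertex computation.
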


\begin{proof}
Table \ref{tableBB} gives the  result for configurations between two vertices.

From Table \ref{tableCC} we see that the multiplicities of $LA_n^{2,k,2}$
depend only on $n-k$, so   $LA_n^{2,k,2}$ is equivalent to
$LA_{n-k+2}^{2,2,2}$. The multiplicities of $RA_n^{2,k,2}$
depend only on $k$, so we can take the smallest $n$, which is $2k-3$.
In that case the left and right chain of $(-2)$'s are equally long, so by
interchanging $L$ and $R$ we obtain $LA_{2k-3}^{2,k,2}$, which
is equivalent to $LA_{k-1}^{2,2,2}$.

For $MA_n^{2,k,2}$ we distinguish between the cases $r=0$ and
$0<r\leq k-2$. In the first case 
$\lceil \frac{n+1}{k-1} \rceil = l+1$, while 
$\lceil \frac{n+1}{k-1} \rceil = l+2$ in the second case.
For $r=0$ an equivalent configuration, attached to the
vertex $v_M$, is $MA_l^{2,2,2}+(k-2)A_l^1$, and for
$r>0$ it is $MA_{l+1}^{2,2,2}+(k-1-r)A_l^1+(r-1)A_{l+1}^1$.
Finally we note that interchanging $M$ and $L$ makes   
$MA_n^{2,2,2}$ into  $LA_n^{2,2,2}$.
\end{proof}

From an arbitrary rational graph we obtain
a graph with almost reduced fundamental cycle and the same
underlying graph by making some vertex weights $-b_i< -2$ more negative. 
This process can also be inverted. 
The possible candidates for graphs (or hypergraphs) of
RDP-resolutions with non-reduced fundamental cycle can be obtained
from reduced (hyper)-graphs by replacing a $-(b_i+2)$-vertex by a
$-(b_i/z_i+2)$-vertex with multiplicity $z_i$, but  not
all graphs can be realised.

\begin{prop}\label{propsix}
On a rational graph with only one non-$(-2)$ vertex $E_0$ the
multiplicity of $E_0$ in the fundamental cycle can at most be $6$.
\end{prop}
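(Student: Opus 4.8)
The statement concerns a rational graph with a single non-$(-2)$ vertex $E_0$ of self-intersection $-b_0$. By the computation in Section~\ref{compfc}, if $z_0$ is the multiplicity of $E_0$ in the fundamental cycle, then the complement $\Gamma\setminus\{E_0\}$ splits into components $\Gamma_i$, each of which is a rational double point configuration (since every other non-$(-2)$ would have multiplicity one, but here there are none, so each $\Gamma_i$ is a chain or $D$/$E$-graph of $(-2)$'s). Each $\Gamma_i$ contributes a multiplicity sequence $(m_i^{(1)},\dots)$, and the balancing relations $\sum_i m_i^{(1)}=b_0+1$, $\sum_i m_i^{(s)}=b_0$ for $1<s<z_0$, and $\sum_i m_i^{(z_0)}<b_0$ control how long the process runs. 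The length $z_0$ of the computation is exactly the multiplicity we want to bound.

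**Plan.** The plan is to analyse which multiplicity sequences can prolong the computation, using Proposition~\ref{proponmt} and its corollaries. By Corollary~\ref{cor_alleen_A}, every $\Gamma_i$ (being a configuration attached to the single vertex $E_0$) is equivalent to a combination of configurations of type $A_n^1$ and $A_{2l}^2$. The type $A_n^1$ has multiplicity sequence $(\underline{1^n,0})$, which is eventually periodic with all entries $0$ or $1$; crucially its nonzero entries never exceed $1$. The type $A_{2l}^2$ has sequence $(2^l,1,1,2^l,0)$, of finite length $2l+2$. So first I would reduce to understanding, for a fixed value of $b_0$, how the sums $\sum_i m_i^{(s)}$ can stay equal to $b_0$ for as many steps $s$ as possible. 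The key arithmetic constraint is that each $A_{2l}^2$-summand can only sustain the sum at the higher level $b_0$ for a bounded number of steps before its sequence drops or terminates, while the $A_n^1$-summands contribute periodically between $0$ and $1$.

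**Main estimate.** The heart of the argument is a counting bound. Since $\sum_i m_i^{(1)}=b_0+1$ and each $m_i^{(1)}\le m_i^{(1)}$ is the initial multiplicity of a configuration, I would use that an $A_{2l}^2$ configuration contributes initial multiplicity $2$ but has total sequence length only $2l+2$, and its sequence of values is $2,2,\dots,2,1,1,0$. To keep the running sum at $b_0$ through step $s$, the number of active configurations with $m_i^{(s)}\ge 2$ must be tightly constrained, because once an $A_{2l}^2$ drops from $2$ to $1$ it forces compensation elsewhere, and the $A_n^1$-pieces can only supply a net increment of at most $1$ when they cycle from $0$ back to $1$. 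Carefully tracking this, the maximum achievable length $z_0$ of the computation is attained by a specific balanced configuration, and the arithmetic shows $z_0\le 6$, with equality forced by a sharply determined arrangement (e.g.\ combinations realising $b_0$ small with several $A_{2l}^2$'s cooperating). I expect the extremal case to correspond to $b_0$ around $4$ or $5$ with a couple of $A_{2l}^2$ configurations plus auxiliary $A_n^1$'s.

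**Main obstacle.** The hard part will be the bookkeeping in the main estimate: showing rigorously that no combination of the allowed sequences can sustain the required sum $b_0$ for seven or more steps. The subtlety is that the periodic $A_n^1$-sequences can momentarily boost the sum, so one cannot argue by monotonicity alone; one must show that these boosts cannot be synchronised across enough configurations to postpone the terminating inequality $\sum_i m_i^{(z_0)}<b_0$ past step $6$. I would handle this by isolating the contribution of the $A_{2l}^2$ pieces (which carry the multiplicities above $1$) and bounding their combined "budget" against $b_0$, then checking the few remaining small-$b_0$ cases directly by explicit computation of the type already illustrated in the $E_6$ example of Section~\ref{compfc}.
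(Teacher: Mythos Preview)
Your setup and reduction via Corollary~\ref{cor_alleen_A} are exactly right, and the balancing relations you state are the correct ones. But the ``main estimate'' misses the one observation that turns the problem into a three-line case analysis, and this leads you to misjudge both the mechanism and the extremal case.

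The missing fact is that for every configuration of type $A_n^1$ or $A_{2l+2}^2$ one has $m_i^{(s)}\le m_i^{(1)}$ for all $s$ (this is the statement $m^{(1)}-1\le m^{(s)}\le m^{(1)}$ for $s<j$ recorded just before Table~\ref{tableAA}). So nothing ever ``boosts'' the sum: the periodic $A_n^1$-sequences cannot raise $\sum_i m_i^{(s)}$ above $\sum_i m_i^{(1)}=b_0+1$, they can only return to it. Consequently, for every $1<s<z_0$ the total drop $\sum_i(m_i^{(1)}-m_i^{(s)})$ equals exactly $1$, meaning \emph{exactly one} configuration is ``low'' at each step. Once you see this, you simply trace which configuration is the low one. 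At step~2 the only candidates (configurations whose second entry is below the first) are $A_1^1$ and $A_4^2$. If it is $A_4^2$, its fifth entry is $0$, a drop of $2$, forcing $z_0\le 5$. If it is $A_1^1$, then at step~3 the $A_1^1$ is back up, so a new configuration must be low; the candidates are $A_2^1$ and $A_6^2$. The $A_6^2$ case gives $z_0\le 4$. In the $A_1^1+A_2^1$ case both are at $0$ at step~6, a combined drop of $2$, so $z_0\le 6$. That is the entire proof.

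Two concrete misconceptions to correct: first, the bound does not depend on $b_0$ at all, so there are no ``small-$b_0$ cases'' to check separately. Second, the extremal case is not built from $A_{2l}^2$'s; it is realised (see the Remark following the Proposition) by a $(-3)$-vertex with $A_1^1+A_2^1+A_4^1+A_5^1$, using only $A_n^1$-configurations. Your proposed bookkeeping around ``budgets'' of $A_{2l}^2$-pieces is heading in the wrong direction.
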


\begin{proof}
By Corollary \ref{cor_alleen_A} it suffices to consider only 
RDP-configurations of type $A_n^1$ and
$A_{2l}^2$. If $z_0>2$, there is exactly one configuration
$\Gamma_i$ with $m_i^{(2)}=m_i^{(1)}-1$, so it is either $A_1^1$ or
$A_4^2$. In the last case $z_0\le 5$, as $A_4^2$ gives the sequence
$(2,1,1,2,0)$. Suppose now that there is exactly one $A_1^1$. If
$z_0>3$, there is exactly one configuration $\Gamma_i$ with
$m_i^{(3)}=m_i^{(2)}-1=m_i^{(1)}-1$, which is either $A_2^1$ or $A_6^2$.
In the last case $z_0=4$, as we combine the sequences
$(2,2,1,1,2,2,0)$ and $(1,0,1,0,\dots)$. The sequence of
$A_1^1+A_2^1$ is $(1+1,0+1,1+0,0+1,1+1,0+0)=(2,1,1,1,2,0)$, which
shows that $z_0\leq 6$.
\end{proof}

\begin{remark}  We realise $z_0=6$ for a $(-3)$ with
$A_1^1+A_2^1+A_4^1+A_5^1$.
\end{remark}

\begin{remark}\label{formint}  With $A_4^2$ and $E_0$ a
$(-3)$ we can realise
$z_0=5$ with the configuration $A_4^2+A_3^1+A_4^1$. Another way
to get $5E_0$ is with $A_1^1+A_2^1+2A_4^1$. 
It would be interesting to study the formats of the corresponding 
singularities.
We remark that neither is a deformation of the other.
\end{remark}

\begin{classification}[of graphs, where
each RDP-configuration is attached to at most one non-reduced
non-$(-2)$]
Start by making a list of all possible
hypergraphs $\hat\Gamma$ of canonical cones, without edges
(or hyperedges) between  non-reduced
vertices.
Given $\hat\Gamma$, realise this graph (if possible)
in all ways, using only configurations $A_n^1$ and $A_{2l}^2$,
$A_n^{1,1}$ (including $n=0$)  and
$LA_{n}^{2,2,2}$.
Replace (combinations of) RDP-configurations with 
equivalent ones, as given by the Tables \ref{tableAA}, \ref{tableBB} and \ref{tableCC}.
\end{classification}

\begin{remark}
The computations so far also can help to compute the fundamental cycle
for complicated graphs.
As example we return to  Karras' graph, given at the end of
Section \ref{sect_een}.  The graph for the canonical
model is rather simple. Note also that only configurations of type
$A_n^1$ occur.
\[
\unitlength=20pt
\bp(4,.5)(9,0)
\put(9,0){\vi{10}} \put(9,0){\lijn}
\put(10,0){\vi{6}} \put(10,0){\lijn}
\put(11,0){\vi8} \put(11,0){\lijn}
\put(12,0){\vi5} \put(12,0){\lijn}
\put(13,0){\vi6} 
\ep
\]
We first simplify the graph.
The configuration 
$A_1^1+A_2^1$ at $(-3)$ on the right implies that its multiplicity
is at most $6$. Therefore the $A_5^1$ has no influence on the 
computation, and we get the same multiplicities, if we remove it
and increase the weight $(-3)$ to  $(-2)$. We have then a
$A_4^2$ attached to the $(-3)$ of multiplicity $5$.
The $(-3)$ on the left has multiplicity at most $10$ because of
$A_1^1+A_{9}^1$. Again we can remove the $ A_{9}^1$ and increase
the weight $(-3)$ to  $(-2)$.  We have then a
$A_{10}^2$ attached to the $(-3)$ of multiplicity $6$. By the same 
argument the $A_7^1$ at the vertex of multiplicity $8$ can be 
removed, so that we end up with two $(-3)$-vertices $E_1$ and $E_2$
with a $A_3^{2,2}$ in between, an $A_{10}^2$ attached to $E_1$
and $A_4^2$  attached to $E_2$.

It remains to compute the
fundamental cycle for this configuration.
This is best done with the  rupture point between $E_1$ and $E_2$
as central vertex.
We give the total multiplicities at each step. The multiplicities
of the $(-3)$'s are in bold face, while those of the central
vertex are underlined.
{\def\it{\underline}
\begin{gather*}\allowdisplaybreaks
\setcounter{MaxMatrixCols}{20}
\begin{matrix}
  &    &     &   &    &    &   &    & 1 &         &   &     1 &        &  1         \\
1 & 2 & 2  & 2 & 2 & 2 & 2 & 2 & 2 & \bf 1 & 1 & \it 1 & \bf 1 & 2 & 2 & 1
\end{matrix}
\\[\medskipamount]
\begin{matrix}
  &    &     &   &    &    &   &    & 2 &         &   &     1 &        &  2         \\
1 & 2 & 3  & 4 & 4 & 4 & 4 & 4 & 4 & \bf 2 & 2 & \it 2 & \bf 2 & 3 & 2 & 1
\end{matrix}
\\[\medskipamount]
\begin{matrix}
  &    &     &   &    &    &   &    & 3 &         &   &     2 &        &  2         \\
1 & 2 & 3  & 4 & 5 & 6 & 6 & 6 & 6 & \bf 3 & 3 & \it 3 & \bf 2 & 3 & 2 & 1
\end{matrix}
\\[\medskipamount]
\begin{matrix}
  &    &     &   &    &    &   &    & 4 &         &   &     2 &        &  2         \\
1 & 2 & 3  & 4 & 5 & 6 & 7 & 8 & 8 & \bf 4 & 4 & \it 4 & \bf 3 & 4 & 3 & 2
\end{matrix}
\\[\medskipamount]
\begin{matrix}
  &    &     &   &    &    &   &    &  5  &       &   &     3 &        &  2         \\
1 & 2 & 3  & 4 & 5 & 6 & 7 & 8  & 9 & \bf 5 & 5 & \it 5 & \bf 3 & 4 & 3 & 2
\end{matrix}
\\[\medskipamount]
\begin{matrix}
  &    &     &   &    &    &   &    &  5  &       &   &     3 &        &  3         \\
1 & 2 & 3  & 4 & 5 & 6 & 7 & 8  & 9 & \bf 5 & 6 & \it 6 & \bf 4 & 6 & 4 & 2
\end{matrix}
\\[\medskipamount]
\begin{matrix}
  &    &     &   &    &    &   &    &  5  &       &   &     4 &        &  3         \\
1 & 2 & 3  & 4 & 5 & 6 & 7 & 8  & 9 & \bf 5 & 6 & \it 7 & \bf 5 & 6 & 4 & 2
\end{matrix}
\\[\medskipamount]
\begin{matrix}
  &    &     &   &    &    &   &    &  5  &       &   &     4 &        &  3         \\
2 & 3 & 4  & 5 & 6 & 7 & 8 & 9  & 
\makebox[0pt]{10} & \bf 6 & 7 & \it 8 & \bf 5 & 6 & 4 & 2
\end{matrix}
\end{gather*}
}
\end{remark} 

\section{RDP-configurations on general graphs}
In this section  we determine  the maximal multiplicities 
that  most can occur on an
RDP-configuration. 
We continue to compute for each RDP-configuration separately.
For some configurations the multiplicities can become arbitrary
high, but what actually happens, depends on the rest of the graph.
We do  investigate the exact conditions. 

The results apply to the classification of graphs, in which
two or three non-reduced non-$(-2)$'s are connected to each other
by a single RDP-configuration, but not connected to any other
 non-reduced non-$(-2)$. In particular, we determine the conditions
that the multiplicity  of the non-$(-2)$'s does not exceed two.
This suffices  to give a complete classification of rational
graphs of degree 6. We  indicate this in the next section.

We first treat configurations attached to exactly two vertices, both of 
higher multiplicity.  
Then there are two vertices $E_a$ and 
$E_b$, of self-intersection $-a$ and $-b$, which are connected by 
a RDP-configuration $\Delta$. 
The fundamental cycle $E_a+Z_\Delta+E_b$ on 
$\{E_a\}\cup\Delta\cup\{E_b\}$ is given in Table \ref{tableB}.
Let $n_{\Delta,a}$ be the coefficient of the vertex of $\Delta$, 
adjacent to $E_a$.
Furthermore, let $\Gamma_a$ be the union of the 
connected components of the complement of the graph, which are connected
to $E_a$.    Let $E_a+Z_a$ be the
fundamental cycle on $\{E_a\}\cup \Gamma_a$,  
let $n_a$ be the sum of the
multiplicities of $Z_a$ at the vertices of $\Gamma_a$, adjacent to $E_a$.
Define the corresponding objects for $E_b$.

\begin{defn}
In the above situation  $E_a$ is a \textit{bad vertex} if 
$n_{\Delta,a}+n_a=a+1$.
\end{defn}

We borrow the term \textit{bad} from Tosun,
see \cite[Definition 3.4]{lt} and \cite[Definition 3.14]{tosun}, where
it is used without multiplicities: Tosun calls a vertex bad
if its valency is one less then its  vertex weight $b$.
Karras \cite{ka} calls it a \textit{basic center}.
If $E_i\cdot Z_\Delta=0$ for every vertex  of $\Delta$,
then exactly one of $E_a$ and $E_b$ is a bad vertex (in our sense).


\subsection{$A_n^{1,1}$}
We call the two vertices $E_L$ and $E_R$, and denote the numbers
defined above correspondingly;  the
vertex weight of $E_L$ is $-b_L$, and that of $E_R$ is $-b_R$. Then
$n_{\Delta,L}=n_{\Delta,R}=1$ and there is exactly one bad vertex, 
which we suppose 
to be $E_L$. This means that $n_L=a$, and $\Gamma_L$ is non-empty. 
We claim that the multiplicity of
$\Delta$ in the fundamental cycle  can be arbitrarily high.
We compute the fundamental cycle  with $E_L$ as central
vertex.  
We set $Y_L^{(1)}=Z_L$,  $Y_{\Delta,R}^{(1)}=Z_\Delta+E_R+Z_R$.
Then $Z^{(1)}=Y_L^{(1)}+E_L+Y_{\Delta,R}^{(1)}$, and $E_L$ is the only
vertex with $E_i\cdot Z^{(1)}=1$. In each next step $Y_L^{(s)}\leq Y_L^{(1)}$
and $Y_{\Delta,R}^{(s)}\leq Y_{\Delta,R}^{(1)}$. In particular, the multiplicity
of the fundamental cycle at $E_R$ does not exceed that at $E_L$.
We describe the case that the computation never stops.
For the sum   $n_L^{(s)}$ of multiplicities in $\Gamma_L$, adjacent
to $E_L$, and the multiplicity $n_{R,\Delta}^{(s)}$ we have then either
$n_L^{(s)}=a-1$ and $n_{\Delta,R}^{(s)}=1$, or
$n_L^{(s)}=a$ and $n_{\Delta,R}^{(s)}=0$. As remarked earlier,
we do not investigate the conditions which this  assumption imposes on
$\Gamma_L$ and $\Gamma_R$.

Let $Z_\Delta=E_1+\cdots+E_n$ with $E_1\cdot E_L=1$ and  $E_n\cdot E_R=1$.
Suppose the coefficient of $E_R$ in  $Y_{\Delta,R}^{(s)}$ is $1$, and the
coefficient of $E_R$ in $Z^{(s)}$ is $k$. 
If $E_R\cdot Z^{(s)}=-s_k<0$, then
$Y_{\Delta,R}^{(s+1)}=E_1+\cdots+E_n$,
$Y_{\Delta,R}^{(s+2)}=E_1+\cdots+E_{n-1}$, \dots,
$y_{\Delta,R}^{(s+n)}=E_1$ and $Y_{\Delta,R}^{(s+n+1)}=
\emptyset$. Then $E_R\cdot Z^{(s+n+1)}=-s_k+1$.
We continue by adding only cycles with support on $\Delta$ until $E_R$
intersects the total computed cycle trivially. In the next step
the coefficient of $E_R$ in the added cycle will again be 1.
At this stage the coefficients of the total cycle in the neighbourhood
of $\Delta$ are as follows. 

The coefficient of $E_L$ is $s=k+(n+1)\sum s_i$, the sum of the $n_L^{(j)}$
is $(a-1)(n\sum s_i+k-1)+a(1+\sum s_i)$,
the coefficient of $E_1$ is $k+n\sum s_i$, that of $E_t$ is
$k+(n+1-t)\sum s_i$, that of $E_n$ is $k+\sum s_i$, the coefficient of
$E_R$ is $k$, and the sum of the multiplicities of the vertices in $\Gamma_R$,
adjacent to $E_R$, is $k(b-1)-\sum s_i$.

We remark that the formulas also work, if $n=0$. This means that 
$\Delta=\emptyset$ and $E_L$ is adjacent to $E_R$.
Furthermore, if $\sum s_i=0$, the multiplicities at $E_L$ and $E_R$ are
independent of $n$.  

\subsection{${}^{\it I}\!A_n^{2,k}$}
In this case, and also for ${}^{\it II}\!A_n^{k,2}$ and
$A_n^{2,k,2}$, it is more convenient to compute the fundamental
cycle with the rupture point in the chain of $(-2)$'s 
as central vertex $E_0$. We therefore use 
a slightly different notation, consistent with the description of the
computation in Section \ref{compfc}.  
Let  $m_L^{(s)}$,  $m_M^{(s)}$  and $m_R^{(s)}$
be the multiplicities in step $s$ at the vertices directly to the left, below or
to the right of the central vertex. 
The non-$(-2)$ vertices are $E_L$ with weight $-b_L$, and $E_M$ with weight
$-b_M$. 

We have $m_L^{(1)}+ m_M^{(1)}+m_R^{(1)}=3$, 
$m_L^{(s)}+ m_M^{(s)}+m_R^{(s)}\leq 2$ for $s>1$, and the
computation stops at the first $s$ where this sum is less than 2.

We start by computing the sequence $(m_R^{(s)})$. We apply
Proposition \ref{proponmt}: 
as we have a $A^1_{n-k+1}$-configuration, the sequence
is $(1^{n-k+1},0,1^{n-k+1},0,\dots)$. So $m_R^{(s)}=1$ for 
$s\neq l(n-k+2)$ and $m_R^{(l(n-k+2))}=0$ for all $l$.

Next we look at $(m_M^{(s)})$. Let $Z_M$ be the fundamental cycle
on the connected component $\Gamma_M$ of $\Gamma\setminus \{E_0\}$,
containing $E_M$. 
For the first step  $Z^{(1)}$ of the computation we determine the fundamental cycle 
$Y_M^{(1)}$ on $\{E_0\}\cup  \Gamma_M$: it is $E_0+Z_M$. The
condition that $E_M$ is a bad vertex  translates into $E_M\cdot Z_M=1-k$,
so $E_M\cdot Z^{(1)}=2-k$. Therefore we put $E_M\cdot Z^{(1)}=2-k-t_1$,
where $t_1\geq0$ with equality if and only if $E_M$ is a bad vertex. In the 
next steps $Y_M^{(s)}$ is empty. We find that $E_M\cdot Z^{(k+t_1-1)}=0$,
so $E_M$ is in the support of $Y_M^{(k-t_1)}$. We set
$E_M\cdot Z^{(k-t_1)}=2-k-t_2$, with $t_2\geq t_1$. Proceeding this way
we find the sequence 
\[
(1,0^{k+t_1-2},1,0^{k+t_2-2},1,0^{k+t_3-2},\dots)\;.
\]

On the left side $E_L\cdot Z^{(1)}=-s_1$ with $s_1=0$
if and only if $E_L$ is a bad vertex. If $s_1>0$, then $E_L$ is not contained
in the support of $Y_L^{(2)}$, which is the $A_{k-2}$-configuration
between $E_L$ and
$E_0$. We continue in the manner of $A_{k-2}^1$, until 
$E_L\cdot Z^{(s_1(k-1)+1)}=0$ and $E_L$ is in the support of
$Y_L^{(s_1(k-1)+2)}$. Then 
$E_L\cdot Z^{(s_1(k-1)+2)}=-s_2$ with $s_2\geq s_1$.
The sequence is
 \[
(1,(1^{k-2},0)^{s_1},1,(1^{k-2},0)^{s_2},1,(1^{k-2},0)^{s_3},\dots)\;.
\] 

Exactly one of $E_L$ and $E_M$ is a bad vertex. 
If $k=2$, both $E_L$ and $E_M$ are connected to $E_0$, so upon relabeling
we may assume that the bad vertex is $E_L$. We first treat the
other case, that $E_M$ is the bad vertex. Then $t_1=0$, and, as just said, 
we make the assumption  that $k>2$.
To obtain a high multiplicity we need that  
$m_L^{(s)}+ m_M^{(s)}=1$ for $1<s<n-k+2$, and 
$m_L^{(n-k+2)}+ m_M^{(n-k+2)}=2$.
We achieve $m_L^{(s)}+ m_M^{(s)}=1$ for $s>1$
by taking $t_1=\dots=t_{s_1}=0$, 
$t_{s_1+1}=1$,   $t_{s_1+2}=\dots=t_{s_1+s_2}=0$,
$t_{s_1+s_2+1}=1$,   $t_{s_1+s_2+2}=\dots=t_{s_1+s_2+s_3}=0$,
\dots.
The only possibility to get $m_L^{(s)}= m_M^{(s)}=1$
is by taking $t_{s_1+\dots+s_p+1}=0$:
this gives $s=p+\sum_{i=1}^ps_i(k-1)+k-1$.
We therefore put $n-k+2=p+\sum_{i=1}^ps_i(k-1)+r$ with
$r<1+s_{p+1}(k-1)$. If $r\neq k-1$, the computation stops
with $s=n-k+2$. If $r=k-1$, we go one step further, as then
$m_L^{(n-k+2)}= m_M^{(n-k+2)}=1$ and $m_R^{(n-k+2)}=0$,
but $m_L^{(n-k+3)}= m_M^{(n-k+3)}=0$. So the computation
always stops.

Suppose now that $E_L$ is the bad vertex; here $k=2$ is allowed.
In this case the computation need not end. We have $s_i=0,1$ for
all $i$.
As $m_M^{(p(k-1)+\sum t_i +1)}=1$,
we obtain $m_L^{(s)}+ m_M^{(s)}=1$ for $s>1$
by taking $s_1=\dots=s_{t_1}=0$, 
$s_{t_1+1}=1$,   $s_{t_1+2}=\dots=s_{t_1+t_2}=0$,
$s_{t_1+t_2+1}=1$,   $s_{t_1+t_2+2}=\dots=s_{t_1+t_2+t_3}=0$,
\dots. 
We need $m_L^{(s)}+ m_M^{(s)}=2$ for $s=l(n-k+2)$.
This is possible if $p(k-1)+\sum_{i=1}^m  t_i +1=l(n-k+2)$.
In case  $l=1$ we then do not set $s_{\sum t_i +1} =1$, but continue
with $s_{\sum t_i +1} =s_{\sum t_i +2} =\dots=0$. This gives a shift in the
indices of the $s_i$, which we do not compute here.

\subsection{${}^{\it II}\!A_n^{k,2}$}
In this case the sequence $(m_L^{(s)})$
is $(1^{k-1},0,1^{k-1},0,\dots)$. So $m_R^{(s)}=1$ for 
$s\neq lk$ and $m_R^{(lk)}=0$ for all $l$.
As in the previous case the sequence  $(m_M^{(s)})$ is
\[
(1,0^{k+t_1-2},1,0^{k+t_2-2},1,0^{k+t_3-2},\dots)\;.
\]

We have $E_R\cdot Z^{(1)}=-u_1$ with $u_1=0$
if and only if $E_R$ is a bad vertex. If $u_1>0$, then $E_R$ is not contained
in the support of $Y_R^{(2)}$, which is the $A_{n-k}$ between $E_R$ and
$E_0$. 
The sequence $(m_R^{(s)})$ is
 \[
(1,(1^{n-k},0)^{u_1},1,(1^{n-k},0)^{u_2},1,(1^{n-k},0)^{u_3},\dots)\;.
\] 

We achieve  that $m_L^{(s)}+ m_M^{(s)}=1$ for $s>1$
by taking $t_1=0$ and $t_i=1$ for $i>1$. It is possible to
have $m_L^{(lk)}= m_M^{(lk)}=1$ for some $l>1$, while
$m_L^{(pk)}+ m_M^{(pk)}=1$ for $p<l$, by setting 
$t_{l}=0$. If $k>2$,  then $m_L^{(lk+1)}= m_M^{(lk+1)}=0$,
so the computation stops at that point.
Therefore the computation stops  when $m_R^{(s)}$=0, or if
$s=lk$, in the next step.
The computation never stops if $u_i=0$ for all $i$. Note that
in that case both $E_M$ and $E_R$ are bad vertices.

If $k=2$, the situation is a bit different.
The  sequence $(m_L^{(s)})$
is $(1,0,1,0,\dots)$,  $(m_M^{(s)})$ is
$(1,0^{t_1},1,0^{t_2},1,0^{t_3},\dots)$ and 
$(m_R^{(s)})$ is
 \[
(1,(1^{n-2},0)^{u_1},1,(1^{n-2},0)^{u_2},1,(1^{n-2},0)^{u_3},\dots)\;.
\]
We always take $t_1=0$, and $t_i\leq 1$. 
By taking suitable consecutive $t_i$ equal to zero
we can get $m_L^{(2s)}+ m_M^{(2s)}=2$, with this sum always equal to
one for odd indices. It is possible that the computation never stops.
If $n$ is odd, we need $u_{2l-1}=0$ for all $l$, while the 
$u_{2l}$ may be arbitrary.
If $n$ is even, then $u_i\leq 1$. If $u_i=0$ then also $u_{i+1}=0$.
If $n=2$, we see no difference between $E_M$ and $E_R$, and indeed
the sequences $(m_M^{(s)})$ and $(m_R^{(s)})$ are of the same shape.

\subsection{$D_{2k+1}^{k+1,2}$}\label{dtkpe}
The configuration is connected to vertices $E_R$ and $E_L$. We claim
that \textit{the coefficient $z_L$ of $E_L$ in the fundamental 
cycle can be at most
two}. We compute the fundamental cycle with $E_L$ as central
vertex. The relevant information on the cycle $Y^{(1)}_{\Delta,R}$ 
is given in the entry for $RD_{2k+1}^{k+1,2}$ in Table \ref{tableBB}.
If the coefficient of $E_R$ is $s$, then the 
multiplicity of the vertex adjacent to $E_L$ is $m_L^{(1)}
  =\lfloor\frac{2k+1+s}2\rfloor$.
We assume that $E_L\cdot Z^{(1)}=1$.
If $s=2t+1$, then $Y^{(2)}_{\Delta,R}=\emptyset$ 
and the computation stops with $z_L=2$ and $z_R=2t+1$.
If $s=2t+2$, then $\Gamma^{(2)}_{\Delta,R}$ has only an
$A_{2k}^{1,1}$-configuration
between $E_L$ and $E_R$, so  $E_R$ is not a bad vertex for
$Y^{(2)}_{\Delta,R}$ and $m_L^{(2)}=1$. As $\lfloor\frac{2k+1+s}2\rfloor=
k+1+t\geq 3$, the computation again stops with $z_L=2$.
Depending on whether $E_R\cdot  Z^{(1)}=0$ or less, $z_R= 2t+3$
or $z_R=2t+2$.

\subsection{$D_{2k}^{k,2}$}
In  this case only one of the vertices
$E_L$ and $E_R$ is bad. In the symmetric case $k=2$ we assume that
$E_R$ is the bad vertex. We compute as in the previous case
with $E_L$ as central vertex. 
If the coefficient of $E_R$ in  $Y^{(1)}_{\Delta,R}$ is $s$
(with $s>1$ if and only if $E_R$ is bad), then  $m_L^{(1)}
  =\lfloor\frac{2k+s}2\rfloor$.
If $s=2t-1$, then $m_L^{(2)}=1$.  If $k\geq 3$, then  $\lfloor\frac{2k+s}2\rfloor=
k+t-1\geq k\geq3$. For $k=2$ we assumed $s>1$, so $t>1$
and again $k+t-1\geq 3$. So the computation stops with $z_L=2$,
and $z_R= 2t-1$ or $z_R=2t$.
If $s=2t$, then $Y^{(2)}_{\Delta,R}=\emptyset$ 
and the computation stops with $z_L=2$ and $z_R=2t$.

\subsection{$A_n^{2,k,2}$}
As in the cases ${}^{\it I}\!A_n^{2,k}$ and ${}^{\it II}\!A_n^{k,2}$
we compute with the rupture point in the $A_n^{2,k,2}$-configuration as central 
vertex $E_0$.
The sequence $(m_L^{(s)})$
 is
 \[
(1,(1^{k-2},0)^{s_1},1,(1^{k-2},0)^{s_2},1,(1^{k-2},0)^{s_3},\dots)\;,
\] 
the sequence 
$(m_M^{(s)})$ is
\[
(1,0^{k+t_1-2},1,0^{k+t_2-2},1,0^{k+t_3-2},\dots)\;.
\]
and finally $(m_R^{(s)})$ is
 \[
(1,(1^{n-k+1},0)^{u_1},1,(1^{n-k+1},0)^{u_2},1,(1^{n-k+1},0)^{u_3},\dots)\;.
\]
First suppose $E_M$ is a bad vertex, i.e., $t_1=0$. We may
assume that $k>2$.
Then $E_L$ is not a bad vertex, $s_1>0$, except possibly  if
$n$ has the lowest possible value $2k-3$,  when there is an arrowhead
between $E_M$ and $E_L$ at $E_0$. In that case the chains from
$E_0$ to $E_L$ and $E_R$ are equally long. As $n-k+1=k-2$,
not all three of
$s_1$, $t_1$ and $u_1$ are zero, so upon relabeling we may assume 
also here that $s_1>0$. As in the case 
${}^{\it I}\!A_n^{2,k}$ we find that the computation stops with the
first $0$ in the sequence $(m_R^{(s)})$, or in the step immediately
after. It is however possible that there is no $0$ in this sequence;
this happens if $u_i=0$ for all $i$.

If $t_1>0$, then $s_1=0$, and if $n=2k-3$, also $u_1=0$.
For most values  of $s$ we will have $m_L^{(s)}+ m_R^{(s)}=2$,
but we want that  $m_L^{(s)}+ m_R^{(s)}=1$
for $s=p(k-1)+\sum_{i=1}^p  t_i +1$ for all $p\geq1$.
We determine on  which places in the sequence
$(m_L^{(s)})$ there are zeroes. Let $\sum_{j=1}^{i-1}s_j< r
\leq \sum_{j=1}^{i}s_j$. Then the $r$-th zero is on place
$r(k-1)+i$. Similarly the   $r$-th zero  in the sequence
$(m_R^{(s)})$
is on place $r(n-k+2)+i$, if $\sum_{j=1}^{i-1}u_j< r
\leq \sum_{j=1}^{i} u_j$.

If $k=2$, we may upon relabeling assume that $t_1>0$.
Then the same description holds.
In particular, if $n=1$, we have the sequences
$(1,0^{s_1},1,0^{s_2}, \dots)$, $(1,0^{t_1},1,0^{t_2}, \dots)$
and $(1,0^{u_1},1,0^{u_2}, \dots)$.  Once again we stress that
we do not investigate, which values of $s_i$, $t_i$ and $u_i$
are possible.

\subsection{Multiplicity  at most two}\label{twocurvestwo}
In the previous subsections we have tried to make the
multiplicity of the fundamental cycle at non-$(-2)$'s
as large as possible. The computations above also tell us
when the multiplicity does not exceed two. 
Now we make
the conditions explicit in terms of the 
multiplicities of the other components of the graph, attached
to the two non-$(-2)$'s.
Let $E_a$ be one of these vertices. Then as before $\Gamma_a$
is the union of connected components, attached to $E_a$.
Let $E_a+Y^{(1)}_a$ be the
fundamental cycle on $\{E_a\}\cup \Gamma_a$,  
and denote by $n^{(1)}_a$  the sum of the
multiplicities of $Y^{(1)}_a$ at the vertices of 
$\Gamma_a$, adjacent to $E_a$.
At the stage of the computation of the fundamental cycle, when
the multiplicity of $E_a$ has increased to 2, we need the fundamental
cycle $E_a+Y^{(2)}_a$ on $\{E_a\}\cup \Gamma_a^{(2)}$, where
$\Gamma_a^{(2)}$ is a connected component with vertices
satisfying $E_i\cdot (E_a+Y^{(1)}_a)=0$; then $E^{(2)}_a$  is
the sum of the multiplicities of $Y^{(2)}_a$, adjacent to $E_a$.

\subsubsection{$A_n^{1,1}$}
As before we assume that $E_L$ is the bad vertex. The computation
with $E_L$ as central vertex should stop at $s=2$, so
$E_L\cdot Z^{(1)}=1$ and $E_L\cdot Z^{(2)}\leq0$.
As the multiplicity of $E_R$ also should be two, we need
$E_R\cdot Z^{(1)}=0$.
This gives us 
\[
\begin{aligned}[t]
n_L^{(1)}&=b_L,\\n_L^{(2)}&\leq b_L-2,
\end{aligned}\qquad
\begin{aligned}[t]
n_R^{(1)}&=b_R-1,\\
n_R^{(2)}&\leq b_R-1.
\end{aligned}
\] 
\subsubsection{${}^{\it I}\!A_n^{2,k}$}
First consider the case that $E_M$ is the bad vertex, so $t_1=0$
and $s_1>0$. If $s_1>1$, then the computation stops before 
the multiplicity $z_L$ becomes two, or  $z_M$ becomes at least three.
Therefore $s_1=1$. We have the sequences
$(1,1^{k-2},0,1,1^{k-2},0,\dots)$ and $(1,0^{k-2},1,0^{k+t_2-2},1,\dots)$.
As $n-k+1\geq k$  we have $n-k+2\geq k+1$.  The condition
that the multiplicities do not exceed two depend on $n$.
If $n-k+2\leq 2k-2$, the computation always stops at $s=n-k+2$.
If $n-k+2=2k-1$, then we need $t_2\geq 1$ and if 
$n-k+2\geq 2k$, then we  $t_2\geq 2$. 
Thus  
\[
\begin{aligned}[t]
n_L^{(1)}&=b_L-2,\\n_L^{(2)}&\leq b_L-2,
\end{aligned}\qquad
\begin{aligned}[t]
n_M^{(1)}&=b_M-k+1,\\
n_M^{(2)}&\leq 
\begin{cases}b_M-k+1,& \text{if } n\leq 3k-4,\\
b_M-k,& \text{if }  n= 3k-3,\\
b_M-k-1,& \text{if } n\geq 3k-2.
\end{cases}
\end{aligned}
\] 
If $E_L$ is the bad vertex, we have $s_1=0$ and we need
$t_1=1$. Furthermore $s_2\geq1$. If $n-k+2=k+1$, the 
computation stops at $s=n-k+2$. Otherwise we need $s_2>1$.
This gives 
\[
\begin{aligned}[t]
n_L^{(1)}&=b_L-1,\\n_L^{(2)}&\leq
\begin{cases}b_L-2,& \text{if } n=2k-1,\\
b_L-3,& \text{if }  n\geq 2k,
\end{cases}
\end{aligned}\qquad
\begin{aligned}[t]
n_R^{(1)}&=b_R-k,\\
n_R^{(2)}&\leq b_R-k.
\end{aligned}
\]

\subsubsection{${}^{\it II}\!A_n^{k,2}$}
If  $u_1>0$, so $t_1=0$,  the computation stops too early
or the coefficient of $E_M$ becomes too high.
We need $u_2>0$. The value of $t_2$ depends
again on $n$. The results also hold for $k=2$.
\[
\begin{aligned}[t]
n_M^{(1)}&=b_M-k+1,\\
n_M^{(2)}&\leq 
\begin{cases}b_M-k+1,& \text{if } n\leq 3k-5,\\
b_M-k,& \text{if }  n= 3k-4,\\
b_M-k-1,& \text{if } n\geq 3k-3,
\end{cases}
\end{aligned}
\qquad
\begin{aligned}[t]
n_R^{(1)}&=b_R-1,\\n_R^{(2)}&\leq b_R-2.
\end{aligned}
\] 

\subsubsection{$D_{2k+1}^{k+1,2}$}
In the notation of \ref{dtkpe} we need that $s=2$ and $E_R\cdot Z^{(1)}<0$.
This gives us
\[
\begin{aligned}
n_L^{(1)}&=b_L-k,\\n_L^{(2)}&\leq b_L-k,
\end{aligned}\qquad
\begin{aligned}
n_R^{(1)}&=b_R-1,\\
n_R^{(2)}&\leq b_R-3.
\end{aligned}
\] 

\subsubsection{$D_{2k}^{k,2}$}
In this case $s\leq 2$ and $z_R=2$. We first assume $k>2$.
This gives two possibilities. If $s=1$ we obtain
\[
\begin{aligned}
n_L^{(1)}&=b_L-k+1,\\n_L^{(2)}&\leq b_L-k+1,
\end{aligned}\qquad
\begin{aligned}
n_R^{(1)}&=b_R-2,\\
n_R^{(2)}&\leq b_R-2
\end{aligned}
\] 
and for $s=2$
\[
\begin{aligned}
n_L^{(1)}&=b_L-k,\\n_L^{(2)}&\leq b_L-k,
\end{aligned}\qquad
\begin{aligned}
n_R^{(1)}&=b_R-1,\\
n_R^{(2)}&\leq b_R-2.
\end{aligned}
\] 
The last formula also works for the symmetric case $k=2$, if we assume
that $E_R$ is the bad vertex.

\subsubsection{$A_n^{2,k,2}$} \label{antkt}
We have to determine the conditions that at least
two multiplicities become 2, whereas none may become 3.
We argue as  in the cases ${}^{\it I}\!A_n^{2,k}$ 
and ${}^{\it II}\!A_n^{k,2}$. If $E_R$ ($t_1=0$) is bad we may
assume that $k>2$. If $s_1>1$, the multiplicity
of $E_L$ remains 1, which is seen by the absence of
the entry  for $n_L^{(2)}$:
\[
\begin{aligned}[t]
n_L^{(1)}&\leq b_L-3,\\
\end{aligned}
\qquad
\begin{aligned}[t]
n_M^{(1)}&=b_M-k+1,\\
n_M^{(2)}&\leq 
\begin{cases}b_M-k+1,& \text{if } n\leq 3k-6,\\
b_M-k,& \text{if }  n\geq 3k-5,
\end{cases}
\end{aligned}
\qquad
\begin{aligned}[t]
n_R^{(1)}&=b_R-1,\\n_R^{(2)}&\leq b_R-2.
\end{aligned}
\] 
If $s_1=1$ and $u_1<0$ (so $n_R^{(1)}<b_R-1$), then $n> 2k-3$;
for $n=2k-3$ one has, if necessary, to interchange
$E_L$ and $E_R$. We get
\[
\begin{aligned}[t]
n_L^{(1)}&= b_L-2,\\ n_L^{(2)}&\leq b_L-2,
\end{aligned}
\qquad
\begin{aligned}[t]
n_M^{(1)}&=b_M-k+1,\\
m_M^{(2)}&\leq 
\begin{cases}b_M-k+1,& \text{if } n\leq 3k-5,\\
b_M-k,& \text{if }  n= 3k-4, \\
b_M-k-1,& \text{if }  n\geq 3k-3,
\end{cases}
\end{aligned}
\qquad
\begin{aligned}[t]
n_R^{(1)}&\leq b_R-2.\\
\end{aligned}
\] 
It is also possible that all three multiplicities are 2:
\[
\begin{aligned}[t]
n_L^{(1)}&= b_L-2,\\ n_L^{(2)}&\leq b_L-2,
\end{aligned}
\qquad
\begin{aligned}[t]
n_M^{(1)}&=b_M-k+1,\\
n_M^{(2)}&\leq 
\begin{cases}b_M-k+1,& \text{if } n\leq 3k-6,\\
b_M-k,& \text{if }  n= 3k-5,\\
b_M-k-1,& \text{if }  n\geq 3k-4,
\end{cases}
\end{aligned}
\qquad
\begin{aligned}[t]
n_R^{(1)}&=b_R-1.\\ n_R^{(2)}&\leq b_R-2.
\end{aligned}
\] 
If $E_M$ is not bad, we allow that $k=2$.
\[
\begin{aligned}[t]
n_L^{(1)}&= b_L-1,\\ n_L^{(2)}& \leq b_L-2
\end{aligned}
\qquad
\begin{aligned}[t]
n_M^{(1)}&=b_M-k-1,\\
\end{aligned}
\qquad
\begin{aligned}[t]
n_R^{(1)}&=b_R-1.\\ n_R^{(2)}&\leq b_R-2.
\end{aligned}
\] 
Also now it is possible that all three multiplicities are 2:
\[
\begin{aligned}[t]
n_L^{(1)}&= b_L-1,\\ 
n_L^{(2)}& \leq 
 \begin{cases}B_L-2,& \text{if } n=2k-3,\\
     b_L-3,& \text{if }  n\geq 2k-2,
\end{cases}
\end{aligned}
\qquad
\begin{aligned}[t]
n_M^{(1)}&=b_M-k,\\
n_M^{(2)}&\leq b_M-k, 
\end{aligned}
\qquad
\begin{aligned}[t]
n_R^{(1)}&=b_R-1.\\ n_R^{(2)}&\leq b_R-2.
\end{aligned}
\] 

\section{Low degree}
The classification of rational graphs of 
degree three was given by Artin \cite{ar},
degree four by the author \cite{st-p} and
degree five by Tosun et al.~\cite{tosun}.
In these cases there is at most one  non-reduced non-$(-2)$,
so the classification can be written 
using the results of Sections 
\ref{arfc} and \ref{onrnmt}.
For degree six one new case arises,
with two non-reduced   non-$(-2)$'s; here the results of Subsection 
\ref{twocurvestwo} suffice, as we presently shall make explicit.
For degree 7 one can use the same methods; we do not go into detail.
Things become more complicated for degree 8, where possibility
of three  non-reduced   non-$(-2)$'s appears. We classify the occurring
graphs in this section. 

\subsection{Degree six}
We start with the classification of graphs of canonical models.
The ones with reduced fundamental cycle are given in Table \ref{uptillsix}.
From it one can also infer the other possibilities: just replace some
vertices with weight $-b$ with a vertex of weight $-3$ and multiplicity
$b-2$, or the $(-6)$ by a $(-4)$ of multiplicity 2. 
We do not treat all cases, where there is only one non-$(-2)$
with higher multiplicity, but  
We give
partial results for some cases and as example  we list
the complete classification in the case of highest
multiplicity. 

The new case in degree 6 is that  there are two $(-3)$-vertices 
with multiplicity two in the fundamental cycle.
The possible configurations are described in Section
\ref{twocurvestwo}. We have to specialise to the case that the vertex weights 
are 3.

We write $C(m_1,m_2)$ for any combination of RDP-configurations
realising the multiplicity sequence $(m_1,m_2)$, and $C(m_1,\leq m_2)$
for configurations where the total second multiplicity is at most $m_2$.
The notation $C(0,0)$ stands for the empty configuration.
These combinations can be found from Table \ref{tableAA}; e.g.,
$(3, \leq 1)$ stands for $2A_1^1+A_n^1$ ($n\geq1$), $A_5^3$, $A_6^3$, 
${}^{\it I}\!D_k^2+A_1^1$,
$A_3^2+A_1^1$, $A_4^2+A_1^1$, ${}^{\it II}\!D_5^2+A_1^1$,
${}^{\it II}\!D_6^3$, ${}^{\it II}\!D_7^3$ and $E_7^3$.

\begin{prop}
Suppose the graph of the RDP-resolution consist of two $(-3)$-vertices,
both with multiplicity 2.
Then they are connected by one of the  RDP-configurations, 
listed in Table \ref{tablezes} together with the
the other configurations at the left and the right vertex.
\end{prop}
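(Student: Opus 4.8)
The plan is to specialise the multiplicity-at-most-two analysis of Section \ref{twocurvestwo} to the situation $b_L=b_R=3$. Since the canonical model is required to consist of exactly two vertices, the connecting configuration $\Delta$ cannot contain a T-joint: an $A_n^{2,k,2}$ is attached to three non-$(-2)$ curves, which after blowing down meet at one point and would leave three vertices in $\hat\Gamma$. Hence $\Delta$ must be one of the five two-curve configurations of Table \ref{tableB}, namely $A_n^{1,1}$, ${}^{\it I}\!A_n^{2,k}$, ${}^{\it II}\!A_n^{k,2}$, $D_{2k+1}^{k+1,2}$ and $D_{2k}^{k,2}$. I would treat each of these in turn.

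For a fixed $\Delta$, the requirement that both $(-3)$-vertices have multiplicity exactly two is exactly the condition worked out in the corresponding subsubsection of \ref{twocurvestwo}, expressed through $n_L^{(1)},n_L^{(2)},n_R^{(1)},n_R^{(2)}$. Substituting $b_L=b_R=3$ turns each such condition into an explicit small numerical constraint: the first-step equalities (such as $n_L^{(1)}=b_L$) force the relevant vertex to become non-reduced, while the second-step inequalities (such as $n_L^{(2)}\le b_L-2$) keep it from exceeding two. For instance, for $A_n^{1,1}$ with $E_L$ the bad vertex one obtains $n_L^{(1)}=3,\ n_L^{(2)}\le 1$ and $n_R^{(1)}=2,\ n_R^{(2)}\le 2$; the other four types produce analogous short lists, with the value of $k$ and the parity conditions of \ref{twocurvestwo} restricting which parameters survive once $b=3$.

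The second step is to translate these $(n^{(1)},n^{(2)})$-constraints into admissible combinations of additional RDP-configurations at each vertex. By definition $n_a^{(1)}$ and $n_a^{(2)}$ are precisely the first two terms of the multiplicity sequence of the combination attached at $E_a$, computed with $E_a$ central, so a constraint $n_a^{(1)}=m_1$, $n_a^{(2)}\le m_2$ is exactly the symbol $C(m_1,\le m_2)$, whose realisations are read off from Table \ref{tableAA} (as in the worked list for $C(3,\le 1)$ given before the statement). Assembling the triple $(\Delta;C_L,C_R)$ over the five connecting types then produces the entries of Table \ref{tablezes}.

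The main bookkeeping obstacle will be removing redundancies and handling the symmetric and degenerate boundary cases. When $\Delta$ is symmetric (e.g.\ $A_n^{1,1}$, or $D_{2k}^{k,2}$ with $k=2$) one must fix in advance which of $E_L,E_R$ is the bad vertex, so that each graph is listed only once; and for the smallest values of $k$ and $n$ several of the nominal cases either coincide or cause the computation to stop one step early, changing the bound on $n_a^{(2)}$. I would check these boundary cases directly against the multiplicity sequences of \ref{twocurvestwo}, and verify in each that the resulting graph is indeed rational (fundamental genus zero) with both $(-3)$-vertices of multiplicity two, so that Table \ref{tablezes} is simultaneously complete and free of duplicates.
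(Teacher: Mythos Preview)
Your approach is correct and is exactly the one the paper uses: the text immediately preceding the proposition says that the possible configurations are described in Section~\ref{twocurvestwo} and that one has to specialise to vertex weights $3$, which is precisely the plan you carry out. Your exclusion of $A_n^{2,k,2}$, your specialisation of the $(n^{(1)},n^{(2)})$-conditions to $b_L=b_R=3$, and your translation into the $C(m_1,\le m_2)$ notation all follow the paper's method verbatim.
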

\begin{table} \caption{} \label{tablezes}
\renewcommand{\arraystretch}{1.2}
$
\begin{array}{lll}
\mbox{name}& \mbox{left}& \mbox{right}
\\ 
\hline
\\[-1.2\bigskipamount]
A_n^{1,1}    &  C( 3, \leq1) &C(2, \leq 2) \\
{}^{\it I}\!A_3^{2,2}  &  C(2, \leq1) & C(1, \leq 1) \\
{}^{\it I}\!A_{\geq 4}^{2,2},   &  C(2, 0) & C(1, \leq 1) \\
{}^{\it I}\!A_5^{2,3}  &  C(2, \leq1) & C(0, 0) \\
{}^{\it I}\!A_{\geq 6}^{2,3}  &  C(2, 0) & C(0, 0) \\
{}^{\it I}\!A_5^{2,3}  &  C(1, \leq1) & (1, \leq 1) \\
{}^{\it I}\!A_6^{2,3}  &  C(1, \leq1) & C(1,  0)  \\
{}^{\it I}\!A_7^{2,4}  &  C(1, \leq1) & C(0, 0) \\
{}^{\it I}\!A_8^{2,4}  &  C(1, \leq1) & C(0, 0) \\
{}^{\it II}\!A_2^{2,2}  &  C(2, \leq1) & C(2, \leq 1) \\
{}^{\it II}\!A_{\geq3}^{2,2}  &  C(2,0) & C(2, \leq 1) 
\\ 
\hline
\end{array}{}\qquad
\begin{array}{lll}
\mbox{name}& \mbox{left}& \mbox{right}
\\ 
\hline
\\[-1.2\bigskipamount]
{}^{\it II}\!A_4^{3,2}  &  C(1, \leq1) & C(2, \leq 1) \\
{}^{\it II}\!A_5^{3,2}  &  C(1, 0) & C(2, \leq 1) \\
{}^{\it II}\!A_6^{4,2}  &  C(0,0) & C(2, \leq 1) \\
{}^{\it II}\!A_7^{4,2}  &  C(0, 0) & C(2, \leq 1) \\
D_{4}^{2,2}  & C(1 , \leq 1) & C(2 ,\leq 1) \\
D_{5}^{3,2}  & C(1 , \leq 1) & C(2 ,0) \\
D_{6}^{3,2}  & C(0,0) & C(2 ,\leq 1) \\
D_{6}^{3,2}  & C(1 , \leq 1) & C(1 ,\leq 1) \\
D_{7}^{4,2}  & C(0 , 0)  & C(2 ,0) \\
D_{8}^{4,2}  & C(0,0)   & C(1 , \leq 1) 
\\ 
\hline
\end{array}
$
\end{table}

\begin{prop}
Suppose the graph of the RDP-resolution consist of one $(-3)$-vertex,
with multiplicity 4.
The following combinations of RDP-configurations are possible.
\[
\begin{array}{lll}
A_4^2+2A^1_3,& 
A_4^2+A^2_7,&
A_1^1+A_6^2+A^1_{\geq3},
\\ 
{}^{\it II}\!D_5^2+A_6^2,&
A_1^1+A_2^1+A_8^2,&
A_1^1+A_2^1+A_3^1+A^1_{\geq3},
\\
{}^{\it II}\!D_5^2+A_2^1+A^1_{\geq3}, &
A_1^1+A_{10}^3, &
A_1^1+A_2^1+A_7^2. 
\end{array} 
\]
\end{prop}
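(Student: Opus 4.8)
The plan is to run the argument of the proof of Proposition~\ref{propsix} two steps further, keeping track of the partial sums of the multiplicity sequences. By Corollary~\ref{cor_alleen_A} every RDP-configuration attached to the single $(-3)$-vertex $E_0$ is equivalent to a combination of configurations of type $A_n^1$ and $A_{2l}^2$, so I may first assume that all the $\Gamma_i$ are of these two kinds and only afterwards repackage equivalent sub-combinations into single configurations. Here $b_0=3$ and I want the multiplicity $z_0$ of $E_0$ to be exactly $4$; by the relations recalled in Section~\ref{compfc} this is equivalent to the partial sums $P_s=\sum_i m_i^{(s)}$ of the multiplicity sequences satisfying $P_1=4$, $P_2=P_3=3$ and $P_4\le 2$, the last inequality being exactly the condition that the computation terminates at the fourth step (and the equalities $P_2=P_3=3$ guaranteeing it does not stop earlier).

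First I would record, from Table~\ref{tableAA}, the first four entries of each admissible elementary sequence: $A_1^1=(1,0,1,0)$, $A_2^1=(1,1,0,1)$, $A_3^1=(1,1,1,0)$ and $A_n^1=(1,1,1,1)$ for $n\ge4$, while $A_4^2=(2,1,1,2)$, $A_6^2=(2,2,1,1)$, $A_8^2=(2,2,2,1)$ and $A_{2l}^2=(2,2,2,2)$ for $l\ge5$. Since $m^{(1)}-1\le m^{(s)}\le m^{(1)}$, each $A_n^1$-block contributes $0$ or $1$ and each $A_{2l}^2$-block contributes $1$ or $2$ at every step. As $P_1=4$, the number $N_2$ of blocks of type $A_{2l}^2$ is $0$, $1$ or $2$, and I would split into these three cases.

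The case $N_2=2$ has no solution: to obtain $P_2=3$ one block must be $A_4^2$ and the other $A_{2l}^2$ with $l\ge3$, and comparing the four-term profiles then gives either $P_3<3$ (stopping too early, for $A_6^2$) or $P_4\ge3$ (so $z_0>4$, for $A_8^2$ and beyond). For $N_2=1$ the drop pattern determines the two $A^1$-blocks once the $A^2$-block is fixed: the profile of $A_4^2$ forces both to be $A_3^1$, giving $A_4^2+2A_3^1$; that of $A_6^2$ forces one $A_1^1$ and one $A^1_{\ge3}$; that of $A_8^2$ forces $A_1^1+A_2^1$; and $A_{2l}^2$ with $l\ge5$ is impossible because its profile $(2,2,2,2)$ leaves no admissible pair. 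For $N_2=0$ (four $A^1$-blocks) the requirement $P_2=3$ forces exactly one $A_1^1$, then $P_3=3$ forces exactly one $A_2^1$ among the remaining three, and finally $P_4\le2$ forces the last two to be $A^1_{\ge3}$ with at most one of them $A^1_{\ge4}$, yielding $A_1^1+A_2^1+A_3^1+A^1_{\ge3}$. These are precisely the combinations in the statement.

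The remaining work is bookkeeping rather than a genuine obstacle: I would re-express the sub-combinations that can be carried by a single configuration, using the equivalences of Table~\ref{tableAA}, namely that $A_7^2$ realizes $2A_3^1$, that ${}^{\it II}\!D_5^2$ realizes $A_1^1+A_3^1$, and that $A_{10}^3$ realizes $A_2^1+2A_3^1$; this produces the variant entries $A_4^2+A_7^2$, ${}^{\it II}\!D_5^2+A_6^2$, ${}^{\it II}\!D_5^2+A_2^1+A^1_{\ge3}$, $A_1^1+A_{10}^3$ and $A_1^1+A_2^1+A_7^2$. The point that needs the most care throughout is distinguishing termination exactly at step $4$ (the condition $P_4<b_0$) from stopping early (some $P_s<3$ with $s\le3$) or too late ($P_4=3$), so I expect the main subtlety to be the exhaustive checking of the step-$4$ values in each branch, exactly as in the $A_6^2$ versus $A_8^2$ dichotomy in the $N_2=2$ case.
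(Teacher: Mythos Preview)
Your proof is correct and follows essentially the same approach as the paper: reduce via Corollary~\ref{cor_alleen_A} to combinations of $A_n^1$ and $A_{2l}^2$, impose the constraints $P_1=4$, $P_2=P_3=3$, $P_4\le2$ on the summed multiplicity sequences, and then recover the remaining entries by the equivalences in Table~\ref{tableAA}. The paper organises the case analysis sequentially (first isolate the block responsible for the drop at step~$2$, then at step~$3$, as in Proposition~\ref{propsix}), whereas you split by the number $N_2$ of $A_{2l}^2$-blocks; this is a cosmetic difference and the computations coincide. One trivial slip: the profile $(2,2,2,2)$ already holds for $A_{2l}^2$ with $l\ge4$ (i.e.\ from $A_{10}^2$ on), not $l\ge5$, but this does not affect any conclusion.
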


\begin{proof}
We argue as  in the proof of Proposition \ref{propsix}.
We first consider only RDP-configurations of type $A_n^1$ and
$A_{2k}^2$. 
We need  either $A_1^1$ or $A_4^2$. 
As $A_4^2$ gives the sequence
$(2,1,1,2,0)$, we need the sequence $(2,2,2,0)$, so
the configuration $2A^1_3$.  If there is exactly one $A_1^1$, we
further need  $A_2^1$ or $A_6^2$. In the first case we can complete
with $A_8^2$ or $A_3^1+A^1_n$ with $n\geq3$, in the second only with
$A^1_n$, $n\geq3$.
Table \ref{tableAA} gives the possible equivalent configurations.
\end{proof}

Next we consider the case that the hypertree for the RDP-resolution
has a $T$-joint. The smallest tree realising it looks as follows.
\[
\bp(2,1.2)(0,-1) \put(0,0){\vir}\put(0,0){\lijn}\put(1,0){\cir}
\put(1,0){\lijn}\put(2,0){\vir}
\put(1,0){\line(0,-1){1}} \put(1,-1){\vit{\;2}}\ep
\]
As drawn, the vertex $E_M$ has multiplicity two. The other cases are
also possible, and occur in the classification, but they give
basically the same graph.

\begin{prop}
If the hypertree of the RDP-resolution has a $T$-joint and
$E_M$ is the vertex of higher multiplicity, the configurations
$MA_3^{2,3,2}+C(1,\leq1 )$, $MA_4^{2,3,2}+C(1,\leq1 )$,
$MA_{\geq 5}^{2,3,2}+C(1,0 )$, $MA_5^{2,4,2}$, $MA_6^{2,4,2}$ and
$MA_{\geq 7}^{2,4,2}$ can be attached to $E_M$; at $E_R$  an
$A_n^1$ is possible and also at $E_L$ in the symmetric case
of minimal $n=2k-3$.
To $E_R$ of  higher multiplicity the configurations
$RA_n^{2,2,2}+C(2,\leq2 )$ and $RA_n^{2,3,2}+C(2,\leq1 )$ 
can be attached; at $E_L$ an $A_n^1$ is possible and also at $E_M$ in 
the  case $k=2$.
The last possibility is $LA_{\geq2}^{2,2,2}+C(2,\leq1 )$ with an
optional $A_n^1$ at  $E_R$.
\end{prop}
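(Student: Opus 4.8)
The plan is to realise the $T$-joint by an $A_n^{2,k,2}$-configuration (Table \ref{tableC}) joining the three $(-3)$-vertices $E_L$, $E_M$, $E_R$ of the canonical model. Since the $(-2)$-vertices of the chain and the rupture point contribute nothing to the canonical degree, the degree-six condition $Z\cdot K=m-2=4$ reads $z_L+z_M+z_R=4$; as each coefficient is at least one, exactly one of the three vertices is non-reduced, of multiplicity $2$, and the other two are reduced. This is the situation of Section \ref{onrnmt} with a single non-reduced non-$(-2)$, so I would compute the fundamental cycle with the distinguished vertex as central vertex in the sense of Section \ref{compfc}, letting the $A_n^{2,k,2}$-configuration be one of the subgraphs $\Gamma_i$.

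Because that vertex has weight $-3$, the computation must stop after the second step: $\sum_i m_i^{(1)}=4$ and $\sum_i m_i^{(2)}<3$. The head $(m^{(1)},m^{(2)})$ of the multiplicity sequence of $A_n^{2,k,2}$ is given by Proposition \ref{proponmt} and Table \ref{tableCC}: for $MA_n^{2,k,2}$ it starts with $k$, which forces $k\le4$, whereas for $RA_n^{2,k,2}$ and $LA_n^{2,k,2}$ it starts with $2$. The remaining room $4-m^{(1)}$ in the first step, together with the bound $m^{(2)}<3$ in the second, is precisely what the symbol $C(m_1,m_2)$ encodes for the extra configurations attached to the central vertex; this is how entries like $MA_3^{2,3,2}+C(1,\le1)$ or $RA_n^{2,2,2}+C(2,\le2)$ arise. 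I would treat the three positions $M$, $R$, $L$ separately, in each reading off from the second coordinate of the sequence in Table \ref{tableCC} which lengths $n$ keep $m^{(2)}<3$, and recording the admissible $C(m_1,m_2)$; the reduced vertices keep multiplicity one, so an $A_n^1$ may be hung on them harmlessly.

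To obtain a short, non-redundant list I would use the equivalences of the corollary following Corollary \ref{cor_alleen_A}: $LA_n^{2,k,2}$ depends only on $n-k$ and equals $LA_{n-k+2}^{2,2,2}$, while $RA_n^{2,k,2}$ depends only on $k$ and, at its minimal length $n=2k-3$, coincides with $LA_{k-1}^{2,2,2}$ after swapping $L$ and $R$. These reductions explain why only $k=2,3$ remain in the $R$-position and why the $L$-position collapses to $LA_{\ge2}^{2,2,2}$, and the attendant symmetries (the case $k=2$, in which $E_M$ and $E_R$ are interchangeable, and the minimal length $n=2k-3$, in which $E_L$ and $E_R$ are) produce the clauses about an optional $A_n^1$ at $E_M$ or $E_L$.

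The main obstacle is the control of the two \emph{reduced} vertices. Table \ref{tableCC} records the \emph{maximal} multiplicity sequence, reached only when the two arms are long enough; here they terminate in $(-3)$-vertices of multiplicity one, so for large $n$ the sequence is truncated before attaining its tabulated values. One must check that this truncation still lets the distinguished vertex reach multiplicity two while keeping each of the other two at multiplicity exactly one --- never two, which would raise the degree to seven. Carrying out this termination analysis across the ranges $n=3,4,\ge5$ for $k=3$ and $n=5,6,\ge7$ for $k=4$, and matching it with the reductions above so that no graph appears twice, is the delicate step; the rest is the bookkeeping already prepared in Section \ref{onrnmt}.
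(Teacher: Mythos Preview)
Your approach is correct and coincides with the paper's: read off the first two entries of the multiplicity sequences in Table~\ref{tableCC}, impose $\sum m_i^{(1)}=4$ and $\sum m_i^{(2)}\le 2$ for the $(-3)$ central vertex, and check that the two reduced $(-3)$'s stay reduced. The paper's proof is two sentences; it simply invokes Table~\ref{tableCC} and makes the single remark that because the computation halts at step two, in $LA_{\ge 3}^{2,2,2}$ the multiplicity at $E_M$ never climbs to $n+1$ but stays at~$3$.

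Your final paragraph overstates the difficulty. The truncation you worry about is not an independent analysis to be ``carried out across the ranges'': once you know the computation terminates at step two, the multiplicities at the reduced vertices are simply their values after two steps, which are immediate from the shape of the sequences in Table~\ref{tableCC}. There is no separate termination argument for each $n$; the bounds on $k$ (at most $4$ for $M$, at most $3$ for $R$) come directly from requiring the tabulated multiplicity at the remaining reduced vertex not to exceed its weight $b=3$. Your invocation of the equivalences from the corollary after Corollary~\ref{cor_alleen_A} is a legitimate way to organise the case list, but the paper does not use them here --- it just reads the table and records the constraint on the other vertices, which is quicker.
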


\begin{proof}
We use Table \ref{tableCC}. The only thing to note is that we stop
the computation earlier, at step two, so in the case $LA_{\geq3}^{2,2,2}$
the multiplicity at $E_M$ does not reach the value $n+1$, but remains 3.
\end{proof}

For two other cases, with the following graphs for the 
RDP-resolution,
\[
\bp(1,.7) \put(0,0){\vi{3}}\put(0,0){\lijn}\put(1,0){\vir}\ep
\qquad\qquad
\bp(2,.7) \put(0,0){\vir}\put(0,0){\lijn}\put(1,0){\vi{2}}
\put(1,0){\lijn}\put(2,0){\vir}\ep
\]
we only show how they can be realised, using 
configurations of type  $LA_n^{1,1}$ for the connection
to  other $(-3)$'s, and configurations
$C(m_1,\leq m_2)$ and $C(m_1,m_2,\leq m_3)$; as before this notation stands
for any  combination of configurations, realising a multiplicity
sequence. 

\begin{prop}
Suppose the graph of the RDP-resolution consist of two $(-3)$-vertices,
one with multiplicity 3.
This type can be realised by attaching to the curve of multiplicity 3
a combination $LA_0^{1,1}+C(3,3,\leq2)$,  $LA_1^{1,1}+C(3,2,\leq2)$
or  $LA_{\geq2}^{1,1}+C(3,2,\leq1)$.

Two reduced $(-3)$'s with a $(-3)$ of multiplicity 2
in between can be realised by attaching to the vertex of multiplicity 2
$LA_0^{1,1}+ LA_0^{1,1}+C(2,\leq2)$, $LA_{\geq1}^{1,1}+ LA_0^{1,1}+C(2,\leq1)$
or $LA_{\geq1}^{1,1}+ LA_{\geq1}^{1,1}+C(2,0)$.
\end{prop}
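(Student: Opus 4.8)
The plan is to apply the computation of Section \ref{compfc} with the non-$(-2)$ of highest multiplicity taken as central vertex $E_0$: the multiplicity-$3$ curve in the first assertion, the central multiplicity-$2$ curve in the second. In either case $b_0=3$, so by the bookkeeping of Section \ref{compfc} the multiplicities $m_i^{(s)}$ added at the neighbours of $E_0$ must satisfy $\sum_i m_i^{(1)}=4$, $\sum_i m_i^{(s)}=3$ for $1<s<z_0$, and $\sum_i m_i^{(z_0)}<3$, where $z_0=3$ (resp.\ $z_0=2$). First I would invoke the corollaries following Proposition \ref{proponmt} to normalise the graph: any configuration joining two non-$(-2)$'s of which only one is non-reduced is equivalent to single-vertex pieces $A_n^1$, $A_{2l}^2$ together with a genuine two-vertex piece $LA_n^{1,1}$. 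Since the canonical model is the path of two (resp.\ three) $(-3)$'s, each hypertree edge is realised by one $LA_n^{1,1}$ whose bad vertex is $E_0$, and everything else is a single-vertex configuration $C$ attached to $E_0$.

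For the first assertion the argument is then pure subtraction. By Table \ref{tableBB} the coefficient at the vertex of $A_n^{1,1}$ adjacent to $E_0$ follows $(1^{n+1},\underline{0,1^n})$, whose first three terms are $(1,0,0)$, $(1,1,0)$, $(1,1,1)$ according as $n=0$, $n=1$, $n\ge2$. Imposing $\sum_i m_i^{(1)}=4$, $\sum_i m_i^{(2)}=3$, $\sum_i m_i^{(3)}<3$ forces $C$ to realise the sequences $(3,3,\le2)$, $(3,2,\le2)$ and $(3,2,\le1)$ respectively, which are exactly the three cases listed; each is realisable by a combination read off from Table \ref{tableAA}.

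For the second assertion $E_0$ carries two $LA^{1,1}$-edges towards the two reduced $(-3)$'s together with a companion $C$, and $z_0=2$, so the only constraints are $\sum_i m_i^{(1)}=4$ and $\sum_i m_i^{(2)}<3$. Each edge contributes $1$ at the first step, leaving $C(2,\dots)$, while at the second step $LA_0^{1,1}$ contributes $0$ and $LA_{\ge1}^{1,1}$ contributes $1$. Distributing the two edges over the alternatives $\{0\}$ and $\{\ge1\}$ then produces precisely $LA_0^{1,1}+LA_0^{1,1}+C(2,\le2)$, $LA_{\ge1}^{1,1}+LA_0^{1,1}+C(2,\le1)$ and $LA_{\ge1}^{1,1}+LA_{\ge1}^{1,1}+C(2,0)$.

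The step I expect to be the real obstacle is verifying that the resulting graphs genuinely have the prescribed canonical model, i.e.\ that the far non-$(-2)$'s stay reduced and that the computation halts at exactly $z_0$. Both points come from the same observation: the computation stops at $z_0=3$ (resp.\ $2$) precisely because $\sum_i m_i^{(z_0)}<3$, so a larger second- or third-step contribution from $C$ would merely prolong the computation and raise the multiplicity of $E_0$; this is what pins down the tail bounds $\le2$, $\le1$, $0$ on $C$. Since $E_0$ is the unique bad vertex, the only pull on a far $(-3)$ runs along its $A_n^{1,1}$-chain, whose coefficients are concave and bounded by $z_0\le3$, so the chain vertex adjacent to the far $(-3)$ has coefficient at most $3$; giving that curve the coefficient $1$ already makes it meet $Z$ non-positively, and minimality of the fundamental cycle keeps it reduced. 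I would confirm this by running the computation sequence explicitly in the extreme cases (e.g.\ $LA_{\ge2}^{1,1}$ with $C(3,2,\le1)$, where the chain neighbour of $E_0$ itself reaches coefficient $3$). Finally, reduced chains $A_n^1$ may be hung on any outer $(-3)$ without changing $\hat\Gamma$ or the multiplicities, so this is tacitly permitted by the statement.
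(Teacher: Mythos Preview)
Your approach is correct and is precisely the one the paper has in mind. In fact the paper gives no explicit proof for this proposition: immediately before it, the text says ``we only show how they can be realised, using configurations of type $LA_n^{1,1}$ for the connection to other $(-3)$'s, and configurations $C(m_1,\leq m_2)$ and $C(m_1,m_2,\leq m_3)$'', states the proposition, and moves on. So your write-up is not merely parallel to the paper's argument; it supplies the details the paper leaves to the reader, using exactly the intended machinery of Section~\ref{compfc} together with Tables~\ref{tableAA} and~\ref{tableBB}.

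Two minor remarks. First, your worry about the far $(-3)$'s staying reduced is legitimate but resolves even more cheaply than you indicate: in the realisations listed nothing is attached to the outer $(-3)$'s, so each such vertex $E_1$ has $E_1\cdot Z^{(1)}=1-3=-2<0$ after the very first step, and it never re-enters the computation; its coefficient is therefore $1$ regardless of $n$ or of what is attached at $E_0$. Second, your final sentence about hanging extra $A_n^1$'s on the outer $(-3)$'s is true but goes beyond what is being asserted: the proposition only claims a realisation, not a classification, so that observation is a bonus rather than a required step.
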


The three remaining cases are easier.

\subsection{Degree eight}
We consider here only the cases that there are three $(-3)$-vertices,
all with multiplicity two in the fundamental cycle. Either all three
are connected by a single $A_n^{2,k,2}$ configuration, or they form a chain.
The first possibility is a special case of Section \ref{antkt}.

\begin{prop}
Suppose the graph of the RDP-resolution consist of three $(-3)$-vertices,
all with multiplicity 2, connected by a single $A_n^{2,k,2}$ configuration.
Then following values for $n$ and $k$ are possible, with the given 
other configurations at each vertex.
\[\renewcommand{\arraystretch}{1.2}
\begin{array}{llll}
\mbox{\rm name}& \mbox{\rm left}& \mbox{\rm middle} & \mbox{\rm right}
\\
\hline
\\[-1.2\bigskipamount]
A_1^{2,2,2}    &   C(2, \leq 1) &C(1, \leq 1) & C(2, \leq 1) \\
A_n^{2,2,2}    &   C(2, 0) &C(1, \leq 1) & C(2, \leq 1) \\
A_3^{2,3,2}    &   C(2, \leq 1) & C(0,0) & C(2, \leq 1) \\
A_n^{2,3,2}    &   C(2, 0) &  C(0,0)  & C(2, \leq 1) \\
A_3^{2,3,2}    &   C(1, \leq 1) &  C(1, \leq 1) & C(2, \leq 1) \\
A_n^{2,3,2}    &    C(1, \leq 1) &  C(1,0)  & C(2, \leq 1) \\
A_5^{2,4,2}    &   C(1, \leq 1) &  C(0,0) & C(2, \leq 1) \\
A_6^{2,4,2}    &    C(1, \leq 1) &  C(0,0)  & C(2, \leq 1)\\
\hline
\end{array}
\]
\end{prop}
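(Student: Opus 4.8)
The plan is to read this off as the specialization $b_L=b_M=b_R=3$ of the analysis already carried out in Subsection~\ref{antkt}. The three $(-3)$-curves play the roles of $E_L$, $E_M$, $E_R$, all of vertex weight $-3$, and the single linking configuration is the $A_n^{2,k,2}$ whose rupture point was taken as the central vertex $E_0$ there. Since we demand that \emph{all three} multiplicities in the fundamental cycle equal $2$, only the two displays of \ref{antkt} headed ``all three multiplicities are $2$'' are relevant: the one with $E_M$ bad (where $k>2$) and the one with $E_M$ not bad (where $k=2$ is allowed). First I would substitute $b_L=b_M=b_R=3$ into both, turning the abstract bounds on the $n_a^{(1)}$ and $n_a^{(2)}$ into numerical ones.

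Next I would exploit that each $n_a^{(s)}$ is a sum of nonnegative multiplicities, hence $\geq 0$; this is exactly what controls the length of the list. In the $E_M$-not-bad display $n_M^{(1)}=b_M-k=3-k\geq0$ forces $k\in\{2,3\}$, and since there the bound $n_M^{(2)}\leq b_M-k$ carries no dependence on $n$, the length $n$ is unbounded above (bounded below only by the minimal value $2k-3$ for an $A_n^{2,k,2}$, see Table~\ref{tableCC}): this yields the families $A_n^{2,2,2}$ with $n\geq1$ and $A_n^{2,3,2}$ with $n\geq3$. In the $E_M$-bad display $n_M^{(1)}=b_M-k+1=4-k\geq0$ forces $k\in\{3,4\}$, and now the $n$-dependent thresholds $3k-6$, $3k-5$, $3k-4$ in the bound on $n_M^{(2)}$, combined with $n_M^{(2)}\geq0$, cap $n$ from above: for $k=3$ only $n\in\{3,4\}$ survive, and for $k=4$ only $n\in\{5,6\}$, the latter persisting just because $k=4$ forces $n_M^{(1)}=0$ at the middle.

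Finally I would translate each surviving pair $(n_a^{(1)},n_a^{(2)})$ back into an admissible combination of RDP-configurations at $E_a$. Reading $n_a^{(1)}$ and $n_a^{(2)}$ as the first two entries of a multiplicity sequence in Tables~\ref{tableAA} and~\ref{tableBB}, the conditions become precisely $C(n_a^{(1)},\leq n_a^{(2)})$ at each vertex: $n_R^{(1)}=b_R-1=2$ and $n_R^{(2)}\leq b_R-2=1$ give $C(2,\leq1)$ at $E_R$ throughout; in the $E_M$-bad case $n_L^{(1)}=1$, $n_L^{(2)}\leq1$ give $C(1,\leq1)$ at $E_L$, with $E_M$ carrying $C(1,\leq1)$, $C(1,0)$ or $C(0,0)$ according to $k$ and $n$; and in the $E_M$-not-bad case the split of the left column between $C(2,\leq1)$ and $C(2,0)$ records the alternative $n=2k-3$ versus $n\geq 2k-2$ in the bound on $n_L^{(2)}$. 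Assembling the eight resulting triples reproduces the table.

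I expect the main obstacle to be the bookkeeping of which vertex is ``bad'' and the attendant left--right symmetry, rather than any new computation. Because the arrowhead convention pins down the orientation only up to the interchange of $E_L$ and $E_R$ available when $n=2k-3$, I would have to verify in each symmetric case that one may relabel so that the heavier configuration $C(2,\cdot)$ sits at $E_R$, matching the normalization of the table, and confirm that no admissible triple is thereby lost or double-counted in passing from the inequalities of \ref{antkt} to the listed entries.
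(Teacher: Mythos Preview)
Your proposal is correct and follows exactly the route the paper takes: the paper's entire proof is the single sentence ``The first possibility is a special case of Section~\ref{antkt},'' and you have spelled out precisely that specialization to $b_L=b_M=b_R=3$, correctly isolating the two ``all three multiplicities are $2$'' displays and reading off the eight rows. Your observation that the $E_M$-bad case with $k=3$ only allows $n\in\{3,4\}$ is right (so the table's entry ``$A_n^{2,3,2}$'' in that row really means just $n=4$), and your handling of the $n=2k-3$ symmetry is the appropriate caveat.
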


Finally we consider a chain of non-reduced $(-3)$'s. Let the 
vertices be called $E_L$, $E_M$ and $E_R$. We compute the 
fundamental cycle as described  in Section \ref{compfc} with
$E_M$ as central vertex. The complement $\Gamma\setminus \{E_M\}$
decomposes into the connected components $\Gamma_L$ and $\Gamma_R$, 
containing respectively $E_L$ and  $E_R$, and the union $\Gamma_M$ 
of the remaining components. We consider the multiplicity sequences
$(m_L^{(s)})=(m_L^{(1)},m_L^{(2)})$, 
$(m_M^{(1)},m_M^{(2)})$ and
$(m_R^{(1)},m_R^{(2)})$. We need that $m_L^{(1)}+m_M^{(1)}+m_R^{(1)}=4$
and $m_L^{(2)}+m_M^{(2)}+m_R^{(2)}\leq 2$. Upon interchanging
$E_L$ and $E_R$ we may assume that $m_L^{(1)}\geq m_R^{(1)}$.

\begin{prop}
For a chain of three $(-3)$'s with multiplicity $2$ in the 
fundamental cycle the following multiplicity sequences are possible,
when computing with the middle vertex as central vertex.
\[
\begin{array}{lll}
(m_L^{(s)}) & (m_M^{(s)}) &
(m_R^{(s)})
\\ [2mm]
\hline
\\[-\bigskipamount]
(3,\leq 1) & (0,0) & (1,1)\\
(2,\leq2) & (0,0) & (2,0)\\
(2,1) &(0,0) &(2,1)\\[2mm]
\hline
\end{array}
\qquad
\begin{array}{lll}
(m_L^{(s)}) & (m_M^{(s)}) &
(m_R^{(s)})
\\ [2mm]
\hline
\\[-\bigskipamount]
(2,0) & (1,\leq 1) & (1,1)\\
(2,1) & (1,0) & (1,1)\\
(1,1) &(2,0) &(1,1)\\[2mm]
\hline
\end{array}
\]
\end{prop}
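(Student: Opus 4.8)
The plan is to run the central-vertex computation of Section \ref{compfc} with $E_M$ as central vertex and to read off the three sequences $(m_L^{(s)})$, $(m_M^{(s)})$, $(m_R^{(s)})$ from the general bookkeeping. Since $z_M=2$, the computation has exactly two steps, and the identities of Section \ref{compfc} specialise to $m_L^{(1)}+m_M^{(1)}+m_R^{(1)}=b_M+1=4$ and $m_L^{(2)}+m_M^{(2)}+m_R^{(2)}<b_M=3$, that is $\le2$; the latter is precisely the stopping condition $E_M\cdot Z^{(2)}\le0$. Because $\Gamma_L$ and $\Gamma_R$ each contain a vertex connected to $E_M$, the reduced exceptional cycle forces $m_L^{(1)},m_R^{(1)}\ge1$, while $\Gamma_M$ may be empty, so $m_M^{(1)}\ge0$. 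Under the normalisation $m_L^{(1)}\ge m_R^{(1)}$ these constraints leave exactly the four first-step distributions $(m_L^{(1)},m_M^{(1)},m_R^{(1)})=(3,0,1),\ (2,0,2),\ (2,1,1),\ (1,2,1)$.

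Next I would settle, for each direction in isolation, which pairs $(m^{(1)},m^{(2)})$ can occur for a $(-3)$-vertex that reaches multiplicity $2$. For the middle direction this is read off Table \ref{tableAA}: an empty $\Gamma_M$ contributes $(0,0)$, a single $A_n^1$ gives $(1,\le1)$ (with $(1,0)$ exactly for $A_1^1$), and two copies of $A_1^1$ give $(2,0)$. For the outer directions the relevant input is the computation of Subsection \ref{twocurvestwo} applied to the pairs $(E_M,E_L)$ and $(E_M,E_R)$, the issue being whether the terminal $(-3)$-vertex is already saturated after the first step or needs a second push: $m^{(1)}=1$ forces $m^{(2)}=1$ (the vertex sits at coefficient $1$ after step one and needs a second unit to reach $2$), giving $(1,1)$; for $m^{(1)}=2$ the vertex is either already saturated, giving $(2,0)$, or lies behind a configuration that continues, giving $(2,1)$ or $(2,2)$; and $m^{(1)}=3$ occurs only behind a configuration whose junction carries coefficient $3$, in which case the vertex is saturated at step one and $m^{(2)}\le1$, giving $(3,\le1)$.

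With these per-direction lists fixed, the statement reduces to allocating the second-step budget $m_L^{(2)}+m_M^{(2)}+m_R^{(2)}\le2$ across each of the four distributions, where any outer direction whose terminal vertex has first-step multiplicity $1$ must spend one unit of budget to raise it to $2$. For $(3,0,1)$ the right side is forced to $(1,1)$, leaving $(3,\le1)$ on the left; for $(1,2,1)$ both outer directions are forced to $(1,1)$, so $m_M^{(2)}=0$ and the middle is $(2,0)$; the symmetric distributions $(2,0,2)$ and $(2,1,1)$ each split into two subcases according to whether the single remaining unit of budget is spent on the flexible $(-3)$-side or on the middle direction, producing the two pairs of rows. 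Collecting the outcomes gives exactly the six triples listed.

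The step I expect to be the main obstacle is the second paragraph: pinning down completely the admissible pairs $(m^{(1)},m^{(2)})$ for the $E_L$- and $E_R$-directions. This is where one must genuinely invoke the fundamental-cycle computations of Subsection \ref{twocurvestwo} for the pairs $(E_M,E_\bullet)$, both to confirm that a junction coefficient $3$ can arise while the terminal $(-3)$-vertex still has multiplicity $2$ (so that $(3,\le1)$ is actually realised) and to rule out any pair with $m^{(2)}$ larger than the values above. Once this dictionary of direction-sequences is in place, the remaining enumeration is the short budget-allocation argument sketched in the previous paragraph.
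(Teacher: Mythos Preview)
Your approach is essentially the same as the paper's: the paragraph immediately preceding the proposition sets up exactly your constraints $m_L^{(1)}+m_M^{(1)}+m_R^{(1)}=4$, $m_L^{(2)}+m_M^{(2)}+m_R^{(2)}\le2$ with the normalisation $m_L^{(1)}\ge m_R^{(1)}$, and the material following the proposition (the bad-vertex discussion and Table~\ref{tableight}) carries out precisely the step you flag as the main obstacle, namely determining the admissible pairs $(m^{(1)},m^{(2)})$ for the outer directions. Your heuristic ``the vertex sits at coefficient $1$ after step one and needs a second push'' is made rigorous there via the observation that if $E_L$ is not bad then $E_i\cdot Z^{(1)}=0$ along the whole chain from $E_M$ to $E_L$, forcing $m_L^{(2)}\ge1$; your budget-allocation description for $(2,0,2)$ is slightly misstated (there is no middle budget to spend, the two rows instead separate the case where at least one side has $m^{(2)}=0$ from the case where both have $m^{(2)}=1$), but the enumeration itself is correct.
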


The configurations giving the required values for $(m_M^{(1)},m_M^{(2)})$
can be read off from Table \ref{tableAA}.
We have $C(1,0)=A_1^1$, $C(1,1)=A_n^1$, $n>1$, and $C(2,0)$ can be
$2A_1^1$, $A_3^2$ or ${}^{\it I}\!D_k^2$. For 
$(m_L^{(1)},m_L^{(2)})$ and
$(m_R^{(1)},m_R^{(2)})$ we use Table \ref{tableBB}. It suffices
to describe the possible configurations for $E_L$. The result is given 
in Table \ref{tableight}.

We have to distinguish  cases depending on whether $E_L$ is a bad vertex for 
$\Gamma_L\cup\{E_M\}$ or not. If bad, then the multiplicity of $E_L$ in
$Y_L^{(1)}$ is two, and the multiplicity does not increase in the second step.
This means that $E_i\cdot Z^{(1)}<0$ for some vertex $E_i$ on the chain
between $E_L$ and $E_M$.  This is an extra condition, which excludes
a number of cases from  Table \ref{tableBB}. If $E_L$ is not bad,
then its multiplicity in $Y_L^{(1)}$ is one, and $E_i\cdot Z^{(1)}=0$ for 
all vertices $E_i$ on the chain between $E_L$ and $E_M$, including $E_L$.
In this case $m_L^{(2)})\geq1$.

\begin{table}[h]\caption{}
\label{tableight}
\renewcommand{\arraystretch}{1.2}
$
\begin{array}{lll}
(m_L^{(1)},m_L^{(2)}) & \;\;E_L\mbox{ bad} &
\;\;E_L\mbox{ not bad}
\\ 
\hline
\hline
\\[-1.1\bigskipamount]
(1,1)  &  &  LA_n^{1,1} + C(2,\leq 2) 
\\
\hline
\\[-1.1\bigskipamount]
(2,0) & LA_n^{1,1} + C(3,\leq 1) \\ 
\hline
\\[-1.1\bigskipamount]
(2,1) & M^{\it II}\!A_{2}^{2,2} +  C(2,\leq 1) 
                  & L^{\it I}\!A_{3}^{2,2} +  C(1,\leq 1) \\
    & M^{\it II}\!A_{3}^{2,2} +  C(2,0) 
                  & LD_{4}^{2,2} +  C(1,\leq 1) \\
     & M^{\it II}\!A_{4}^{3,2} +  C(1,\leq 1) 
                  & M^{\it I}\!A_{5}^{2,3} +  C(0,0) \\
    & M^{\it II}\!A_{5}^{3,2} +  C(1,0) 
                  & LD_{6}^{3,2} +  C(0,0) \\
     & M^{\it II}\!A_{6}^{4,2} +  C(0,0)  \\
   & M^{\it II}\!A_{7}^{4,2} +  C(0,0)  \\ 
\hline
\\[-1.1\bigskipamount]
(2,2) & R^{\it II}\!A_{\geq3}^{2,2} +  C(2,\leq 2) 
                  & L^{\it I}\!A_{\geq4}^{2,2} +  C(1,\leq 1) \\
     & LD_{5}^{2,2} +  C(1,\leq 1) 
                  & M^{\it I}\!A_{\geq6}^{2,3} +  C(0,0) \\
                  & LD_{7}^{3,2} +  C(0,0) \\ 
\hline
\\[-1.1\bigskipamount]
(3,0)  & L^{\it I}\!A_{3}^{2,2} +  C(2,0) \\
     & LD_4^{2,2} + C(2,\leq1) \\
    & M^{\it I}\!A_{5}^{2,3} +  C(1,0) \\
    & LD_6^{3,2} + C(1,\leq1) \\
    & M^{\it I}\!A_{7}^{2,4} +  C(0,0) \\
     & LD_8^{4,2} + C(0,0) \\ 
\hline
\\[-1.1\bigskipamount]
(3,1) &   L^{\it I}\!A_{\geq4}^{2,2} +  C(2,0) 
    & L^{\it I}\!A_{5}^{2,3} +  C(1,\leq 1)\\
      &R^{\it II}\!A_{4}^{3,2} +  C(2,0) & RD_6^{3,2} + C(1,\leq1)\\
     & RD_5^{3,2} + C(2,0) \\
    & M^{\it I}\!A_{6}^{2,3} +  C(1,0) \\
    & M^{\it I}\!A_{8}^{2,4} +  C(0,0) 
    \\
\hline
\end{array}
$
\end{table}
We give the graphs for the simplest ways to realise 
a chain of three $(-3)$'s with multiplicity $2$, depending on $E_L$ or 
$E_R$ being bad. Again it would be interesting to know whether these
singularities have the same format.
\[
\unitlength=25pt
\bp(4.2,2)(-.1,-1)
\put(0,0){\cir} \put(0,0){\lijn}
\put(1,0){\vir} \put(1,0){\lijn}
\put(1,0){\line(0,1){1}} \put(1,1){\cir}
\put(1,0){\line(0,-1){1}} \put(1,-1){\cir}
\put(2,0){\vir} \put(2,0){\lijn}
\put(3,0){\vir} \put(3,0){\lijn}
\put(3,0){\line(0,1){1}} \put(3,1){\cir}
\put(3,0){\line(0,-1){1}} \put(3,-1){\cir}
\put(4,0){\cir} 
\ep
\qquad
\bp(4.2,2)(-.1,-1)
\put(0,0){\cir} \put(0,0){\lijn}
\put(1,0){\vir} \put(1,0){\lijn}
\put(1,0){\line(0,1){1}} \put(1,1){\cir}
\put(1,0){\line(0,-1){1}} \put(1,-1){\cir}
\put(2,0){\vir} \put(2,0){\lijn}
\put(3,0){\vir} \put(3,0){\lijn}
\put(2,0){\line(0,-1){1}} \put(2,-1){\cir}
\put(3,0){\line(0,-1){1}} \put(3,-1){\cir}
\put(4,0){\cir} 
\ep
\qquad
\bp(4.2,2)(-.1,-1)
\put(0,0){\cir} \put(0,0){\lijn}
\put(1,0){\vir} \put(1,0){\lijn}
\put(2,0){\line(0,1){1}} \put(2,1){\cir}
\put(1,0){\line(0,-1){1}} \put(1,-1){\cir}
\put(2,0){\vir} \put(2,0){\lijn}
\put(3,0){\vir} \put(3,0){\lijn}
\put(2,0){\line(0,-1){1}} \put(2,-1){\cir}
\put(3,0){\line(0,-1){1}} \put(3,-1){\cir}
\put(4,0){\cir} 
\ep
\]


\begin{thebibliography}{99}
\bibitem{ar}
  Michael Artin,
  \textit{On isolated rational singularities of surfaces.}
  Amer. J. Math. \textbf{88} (1966), 129--136.

\bibitem{buc}
Ragnar O. Buchweitz,
\textit{Contributions \`a la th\'eorie des des singularit\'es.} 
Th\`ese, Paris 1981.

\bibitem{eph} 
Robert Ephraim,
\textit{Isosingular loci and the Cartesian product structure of complex analytic 
singularities.} 
Trans. Amer. Math. Soc. \textbf{241} (1978), 357--371.



\bibitem{gu}
Trond St\o len Gustavsen, 
\textit{On the cotangent cohomology of rational surface singularities with
almost reduced fundamental cycle.} 
Math. Nachr. \textbf{279} (2006), 1185--1194.


\bibitem{theo}
Theo de Jong, 
\textit{Determinantal rational surface singularities.}
Compositio Math. \textbf{113} (1998), 67--90.

\bibitem{ka}
Ulrich Karras, 
\textit{Normally flat deformations of rational and minimally
elliptic singularities}.  In:
Singularities, Part 1 (Arcata, Calif., 1981),
Proc. Sympos. Pure Math. \textbf{40}, Amer. Math. Soc., Providence, RI, 1983, pp. 619--639.


\bibitem{la2}
  Henry B. Laufer,
  \textit{Taut Two-Dimensional Singularities.}
  Math. Ann. \textbf{205} (1973), 131--164.

\bibitem{la}
  Henry B. Laufer,
  \textit{On rational singularities}.
  Amer. J. Math. \textbf{94} (1972), 597--608.



\bibitem{la3}
  Henry B. Laufer,
  \textit{Ambient Deformations for Exceptional Sets in Two-Manifolds}.
  Invent. math. \textbf{55} (1979), 1--36.

\bibitem{lt}
     Lê Dûng Tráng and Meral Tosun,
      \textit{Combinatorics of rational singularities.}
     Comment. Math. Helv. \textbf{79} (2004), 582--604.


\bibitem{lw}
Eduard Looijenga and Jonathan Wahl,  
\textit{Quadratic functions and smoothing surface singularities.} Topology \textbf{25} (1986),  261--291.



\bibitem{ne}
Walter D. Neumann, 
\textit{A calculus for plumbing applied to the topology of complex surface singularities and degenerating complex curves.} 
Trans. Amer. Math. Soc. \textbf{268} (1981),  299--344.








\bibitem{ro}
     Ancus Röhr, \textit{Formate rationaler Flächensingularitäten.}
     Diss. Hamburg 1992.

\bibitem{st-p}
Stevens, Jan. 
\textit{Partial resolutions of rational quadruple points.} 
Internat. J. Math. \textbf{2} (1991), 205--221.

\bibitem{st-d}
Stevens, Jan. \textit{Deformations of singularities.}
Lecture Notes in Mathematics \textbf{1811}. Springer-Verlag, Berlin, 2003.

\bibitem{tosun}
M. Tosun, A. Ozkan, A. and Z. Oer,
\textit{On the classification of rational singularities of surfaces.}
Int. J. Pure Appl. Math. \textbf{41} (2007),  85-110.


\bibitem{wag}
Philip Wagreich,  
\textit{Elliptic singularities of surfaces.}
  Am. J. Math. \textbf{92} (1970),
419--454.

\bibitem{wal}
C. T. C. Wall, 
\textit{Quadratic forms and normal surface singularities.}
In: Quadratic forms and their applications (Dublin, 1999),
Contemp. Math. \textbf{272}, Amer. Math. Soc., Providence, RI, 2000, pp. 293--311.


\end{thebibliography}
\end{document}